\setlist[itemize]{leftmargin=.5in}
\newmdenv[
topline=false,
bottomline=false,
rightline=false,
skipabove=\topsep,
skipbelow=\topsep,
linewidth=4
]{siderules}
\newtheorem{theorem}{Theorem}[section]
\newtheorem{lemma}[theorem]{Lemma}
\newenvironment{proof}{\paragraph{Proof:}}{\hfill$\square$}
\newcommand{\mat}[1]{\begin{pmatrix}#1\end{pmatrix}}
\newcommand{\eps}{\varepsilon}
\newcommand{\w}{\mathbf{w}}
\newcommand{\z}{\mathbf{z}}
\newcommand{\bra}[1]{\left(#1\right)}
\title{Turing Instability Suppressed and Induced by Multiplicative Noise in Brusselator System}
\author[1]{Qasim Khan}
\author[2]{Anthony Suen}
\author[3]{Bao Quoc Tang\footnote{Corresponding author.}}
\affil[1]{\small Department of Mathematics and Information Technology,
	The Education University of Hong Kong,
	10 Lo Ping Road, Tai Po, N.T,
	Hong Kong.\break
	\href{mailto:qasimkhan@s.eduhk.hk}{qasimkhan@s.eduhk.hk}}
\affil[2]{\small Department of Mathematics and Information Technology,
	The Education University of Hong Kong,
	10 Lo Ping Road, Tai Po, N.T,
	Hong Kong.\break
	\href{mailto:acksuen@eduhk.hk}{acksuen@eduhk.hk}}
\affil[3]{\small Department of Mathematics and Scientific Computing, University of Graz, Graz, Austria\break  \href{mailto:quoc.tang@uni-graz.at}{quoc.tang@uni-graz.at}}
\date{}
\begin{document}
	\maketitle
	\begin{abstract}
		The effect of multiplicative noise to the Turing instability of the Brusselator system is investigated. We show that when the noise acts on both of the concentrations with the same intensities, then the Turing instability is suppressed provided that the intensities are sufficiently large. This aligns with the stabilizing effect of multiplicative noise in partial differential equations. Utilizing the linearized system, we can quantify the magnitude of noise which stabilizes the system. On the other hand, when the noise is involving only one concentration, then the Turing instability can be triggered with suitable intensities. These are confirmed by numerical simulations.
	\end{abstract}
	\begin{keywords}
		Brusselator system; Turing instability; Multiplicative noise; Instability induced/suppressed by noise. \\
	\end{keywords}
	%

	\section{Introduction}
	Reaction-diffusion system (RDS) provides a compelling and useful modeling tool for many problems in real life. One of the main reasons is that, even a simple RDS can exhibit many complex patterns as shown in the well known work \cite{pearson1993complex}. Revealing the mechanism for the formation of these patterns is the objective of many works since then or even earlier. One prime example is the famous work of Turing \cite{turing1952chemical} where he argued that one possible mechanism is the complicated interplay between diffusion and (chemical) reactions in terms of activators and inhibitors. More precisely, it was shown that if the  activator has short range diffusion and inhibitor has long range diffusion, then the stable spatially homogeneous equilibrium becomes unstable, consequently leading to spatially inhomogeneous patterns. This has initiated a now classical topic named \textit{Turing instability}, and is the subject of a vast number of research works (see refer the interested reader to the book \cite{murray2007mathematical} for more details). 
	
	\medskip
	The Turing instability has been confirmed in certain real life biological systems, see for instance \cite{miyazawa2010blending} where experiments on zebrafish stripes showed that they are formed as an interaction of two types of cells, playing roles of activator and inhibitor, following Turing mechanism. However, the different scales of diffusion might not be ``physical'' in biological systems cf. \cite{haas2021turing}, so researchers looked for additional features that suppress or trigger Turing instability, including systems in time evolving domains \cite{madzvamuse2010stability,van2021turing}, systems with spatially inhomogeneous coefficients \cite{van2021pattern,kozak2019pattern}, or the effect of random noise. For instance, at the level of reaction-diffusion master equations, \cite{hori2012noise} showed that noise can induce spatial patterns for chemical reactions, and \cite{dobramysl2018stochastic} studied the effects of noise on patterns for RDS or population dynamics. Some existing works looked at the effect of additive noise, for instance, \cite{kolinichenko2024noise} showed induced pattern for thermochemical kinetics, \cite{biancalani2017giant} investigated giant amplification of noise in fluctuation-induced pattern formation, and \cite{bashkirtseva2021stochastic} considered the sensitivity of Turing patterns. For multiplicative noise, \cite{zheng2017turing} used some explicit stochastic noises and Taylor expansion to obtain their effect on Turing bifurcation, while \cite{xiao2023effects} investigated the effects of noise on critical points of Turing instability on networks. Stochastic Turing patterns triggered by noise have been even confirmed through experiments in \cite{karig2018stochastic}. 
	
	\medskip
	For scalar PDE, multiplicative noise has been known to have stabilizing effect, see e.g. \cite{kwiecinska1999stabilization,caraballo2001stabilization,caraballo2006stabilization,caraballo2007effect}. Especially, in \cite{kwiecinska1999stabilization}, by looking at the Fourier analysis of a linear PDE, it was shown that the stabilization is because the multiplicative noise shifts the spectral of the Laplacian to the left with the magnitude proportional to the (square of) noise intensities. In a way, this could be seen as scaling of the range of diffusion. For systems, it is therefore natural to ask: {\it what happens if the scaling for different species are the same or different?} In this paper, we investigate this question by looking at the PDE Brusselator system with multiplicative noise. It will be demonstrated that, if the noise intensities are the same for different species, multiplicative noise has suppression effect on Turing instability. On the other hand, if the noise is acting on only one species, it could induce instability even if the short range activator - long range inhibitor is not fulfilled for deterministic systems. The suppression effects are proved by analyzing a linearized systems and using some Fourier analysis, while the induced instability is shown via numerical simulations. It is remarked that the analysis can be carried out for other RDS concerning Turing instability. We would like to also point out several works studying the effect of noise in Brusselator system in different contexts, see e.g. \cite{yerrapragada1997analysis,bashkirtseva2000sensitivity,engel2024noise} and references therein.

	\medskip
	The manuscript is structured as follows: Section \ref{s1} presents the stochastic Brusselator with multiplicative noise. The linear analysis is given in Section \ref{sec3}, while  Section \ref{s2} is devoted to numerical simulations for the nonlinear system. In the last Section we give some discussion and outlook.
	\section{Stochastic Brusselator system}\label{s1}
	
	The Brusselator is characterized by the reactions
	\begin{align*}
		& \mathsf A \to \mathsf  X\\
		& 2\mathsf  X + \mathsf  Y \to 3 \mathsf {X} \\
		& \mathsf  B + \mathsf  X \to \mathsf  Y + \mathsf  D\\
		& \mathsf  X \to \mathsf  E
	\end{align*}
	for certain chemicals $\mathsf  A, \mathsf  B, \mathsf  X, \mathsf  Y, \mathsf  D$ and $\mathsf  E$. It is assumed that $\mathsf  A, \mathsf  B$ are in large excess, and therefore their concentrations can be taken as constants, say $A>0$ and $B>0$. We arrive at the following reaction-diffusion system for the concentrations of $X$ and $Y$, which are denoted by $u$ and $v$, respectively,
	\begin{equation}\label{B-system}
		\begin{cases}
			\partial_t u = d_u\Delta u + A - (B+1)u + u^2v, &x\in \Omega, t> 0,\\
			\partial_t v = d_v\Delta v + B u - u^2v, &x\in\Omega, t>0,\\
			\nabla u\cdot \nu(x) = \nabla v \cdot \nu(x) = 0,  &x\in\partial\Omega, t>0,\\
			u(x,0) =u_0(x), \; v(x,0) = v_0(x), &x\in\Omega,
		\end{cases}
	\end{equation}
	where $d_u, d_v> 0$ are positive diffusion coefficients. Here $\Omega\subset \mathbb R^n$, $n\ge 1$, is an open, bounded domain with smooth boundary $\partial\Omega$ (in case $n\ge 2$), and $\nu(x)$ is the outward normal vector at $x\in\partial\Omega$.
	Due to the homogeneous Neumann boundary conditions, it is straightforward to see that $(u_\infty, v_\infty) = (A, B/A)$ is the unique spatially homogeneous steady state of \eqref{B-system}. Moreover, if $B < 1 + A^2$, this steady state is asymptotically stable. It is also well-known, see e.g. \cite{perthame2015parabolic} that in this case, Turing instability occurs when $d_u \ll d_v$, leading to the formation of patterns, while the steady state $(u_\infty, v_\infty)$ is still asymptotically stable when $d_u \approx d_v$. In this work, we are interested in the effect of multiplicative noise concerning this Turing instability. More precisely, we consider the following
	stochastic Brusselator system with multiplicative noise
	\begin{equation}\label{B_system}
		\begin{cases}
			d u = (d_u\Delta u + A - (B+1)u + u^2v)dt + \sigma_u(u- u_\infty) dW_t, &x\in \Omega, t> 0,\\
			d v = (d_v\Delta v + B u - u^2v)dt + \sigma_v (v-v_\infty) dW_t, &x\in\Omega, t>0,\\
			\nabla u\cdot \nu = \nabla v \cdot \nu = 0, &x\in\partial\Omega, t>0,\\
			u(x,0) =u_0(x), \; v(x,0) = v_0(x), &x\in\Omega,
		\end{cases}
	\end{equation}
	where $W_t$ is a real-valued Wiener process, and $\sigma_u, \sigma_v \in \mathbb R$ are noise intensities. The forms of the noise are so that $(u_\infty, v_\infty)$ is also a steady state of the stochastic system. 
	
	
	\section{Linear stability analysis for the stochastic system} \label{sec3}
	
	The linearized system of \eqref{B_system} around the equilibrium $(u_\infty, v_\infty)$ reads as
	\begin{equation}\label{LR_sys}
		\begin{cases}
			du = (d_u\Delta u + (B-1)u + A^2v)dt + \sigma_u u dW_t, &x\in\Omega, t>0,\\
			dv = (d_v\Delta v - Bu - A^2v)dt + \sigma_v v dW_t, &x\in\Omega, t>0,\\
			\nabla u\cdot \nu = \nabla v \cdot \nu = 0, &x\in\partial\Omega, t>0,\\
			u(x,0) =u_0(x) - u_\infty, \; v(x,0) = v_0(x) - \infty, &x\in\Omega.
		\end{cases}
	\end{equation}
	
	Denoting $a = B - 1, b = A^2, c = -B$ and $d = -A^2$, we rewrite the linearized system in the general form 
	\begin{equation}\label{s2}
		\begin{cases}
			du = (d_{u} \Delta u + au+bv)dt+\sigma_u u d W_t,\\
			dv = (d_v \Delta v + cu+dv)dt+\sigma_v v d W_t.
		\end{cases}
	\end{equation}
	For simplicity, we consider the case where $u$ is the activator and $v$ is the inhibitor, meaning that $a>0$ and $d<0$, which consequently imposes $B>1$. In this case, it is well-known that, if $d_u/d_v$ is sufficiently small, then Turing instability occurs (see e.g. \cite[Theorem 7.1]{perthame2015parabolic}).
	
	\medskip
	First we have a look at the corresponding deterministic differential system reads as
	\begin{equation}\label{ODE}
		\mat{\bar u\\ \bar v}' = \mat{a & b\\ c&d}\mat{\bar u\\ \bar v} =: M\mat{\bar u\\ \bar v}.
	\end{equation}
	Due to the condition $B < 1+A^2$, it follows that
	\begin{equation*}
		\mathcal T:= \text{Trace}(M) = B - 1 - A^2 < 0, \quad \mathcal D:= \text{Det}(M) = A^2 > 0.
	\end{equation*}
	Therefore, the zero solution to the ODE system \eqref{ODE} is exponentially stable, i.e. there exist constants $C, \lambda_{\text{ODE}} > 0$ such that
	\begin{equation*}
		|\bar u(t)| + |\bar v(t)| \le Ce^{-\lambda_{\text{ODE}}\cdot t} \quad \forall t\ge 0.
	\end{equation*}
	
	To analyze the stochastic system, we consider the eigenvalue problem with Neumann boundary condition
	\begin{equation*}
		\begin{cases}
			-\Delta \varphi = \mu \varphi, &x\in\Omega,\\
			\nabla \varphi \cdot \nu = 0, &x\in\partial\Omega.
		\end{cases}
	\end{equation*}
	It is well-known that there is a system of eigenvalue-eigenfunction $(\mu_k,\varphi_k)_{k\ge 0}$ such that
	\begin{itemize}
		\item $0 = \mu_1 < \mu_2 \le \mu_3 \le \longrightarrow +\infty$,
		\item $\{\varphi_k\}_{k\ge 1}$ is an orthonormal basis of $L^2(\Omega)$.
	\end{itemize}
	We use the same basic decomposition technique for $u(t)$ and $v(t)$, i.e.,
	\begin{equation}\label{Fourier_expansion}
		u(x,t)=\sum_{k=1}^{\infty}g_{k}(t)\varphi_k(x), \ \  \ v(x,t)=\sum_{k=1}^{\infty}h_{k}(t)\varphi_k(x).
	\end{equation}
	Inserting these into system (\ref{s2}) then multiply by $\varphi_k$ and integrate over $\Omega$, we get
	\begin{eqnarray}
		d \begin{bmatrix}
				g_{k}(t) \\
				h_{k}(t)
		\end{bmatrix}
		=
		\begin{bmatrix}
				a -d_{u} \mu_k &b \\
				c &d-d_{v} \mu_k\\
		\end{bmatrix}
		\begin{bmatrix}
				g_{k}(t) \\
				h_{k}(t)\\
		\end{bmatrix} dt
		+
		\begin{bmatrix}
				\sigma_u &0 \\
				0 &\sigma_v\\
		\end{bmatrix}
			\begin{bmatrix}
				g_{k}(t) \\
				h_{k}(t)\\
		\end{bmatrix} d W_t.
	\end{eqnarray}
	By defining
	\begin{equation*}
		V_k(t):= \begin{bmatrix}
			g_{k}(t) \\
			h_{k}(t)
		\end{bmatrix},
		\quad \mathbf{A}_k:= 
		\begin{bmatrix}
			a -d_{u} \mu_k &b \\
			c &d-d_{v} \mu_k\\
		\end{bmatrix},
		\quad \mathbf{B}:=
		\begin{bmatrix}
			\sigma_u &0 \\
			0 &\sigma_v\\
		\end{bmatrix},
	\end{equation*}
	we arrive to the following stochastic differential system for each $k\ge 1$
	\begin{equation}\label{sto1}
		d{V_{k}(t)}=\mathbf{A}_kV_{k}(t)dt+\mathbf{B}V_{k}(t)d W_t.
	\end{equation}
	
	In the following subsections, we analyze the case of same noise intensities $\sigma_u = \sigma_v$ and different intensities $\sigma_u = 0, \sigma_v \ne  0$ separately.

			\subsection{The case of the same noise intensities}
			
			If $\sigma_u=\sigma_v = \sigma \in \mathbb R$, then $\mathbf{A}_k$ and $\mathbf{B}$ are commutative and we have the formal solution to \eqref{sto1} (see e.g. \cite[Section 3.4]{mao2007stochastic}) as
			\begin{equation}\label{sol1}
				V_k(t)=\exp\left\{\left(\mathbf{A}_k- \frac{\mathbf{B}^2}{2}\right)t \right\}
				\exp(\mathbf{B}  W_t)V_{k,0},
			\end{equation}
			Where
			$\displaystyle 
			V_{k,0} =	\begin{bmatrix}
				u_{k,0} \\
				v_{k,0}\\
			\end{bmatrix}
			$
			with $\displaystyle u_{k,0} = \int_{\Omega} u_0\varphi_k dx$ and $\displaystyle v_{k,0} = \int_{\Omega}v_0 \varphi_k dx$. We now investigate the eigenvalues of $\mathbf{A}_k - \dfrac{1}{2}\mathbf B^2$.
			\begin{lemma}\label{lem1}
				There exists $\sigma_0\in \mathbb R$ and $\omega > 0$ such that for all $\sigma^2 \ge \sigma_0^2$ 
				\begin{equation*}
					\mathfrak{S}\left(\mathbf{A}_k - \dfrac{\mathbf B^2}{2}\right) \subset \{z\in \mathbb C: \, \Re(z) \le -\omega\},\quad \forall k\ge 1,
				\end{equation*}
				where $\mathfrak{S}(M)$ denotes the spectrum of a matrix $M$.
			\end{lemma}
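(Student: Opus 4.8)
The plan is to exploit that, with equal intensities, the It\^o correction acts as a scalar shift. Since $\sigma_u=\sigma_v=\sigma$, we have $\mathbf B^2=\sigma^2 I$, hence
\[
\mathbf{A}_k-\frac{\mathbf B^2}{2}=\mathbf{A}_k-\frac{\sigma^2}{2}I,
\]
so that $\mathfrak S(\mathbf{A}_k-\mathbf B^2/2)=\{\lambda-\sigma^2/2:\lambda\in\mathfrak S(\mathbf{A}_k)\}$. It therefore suffices to bound $\max\{\Re\lambda:\lambda\in\mathfrak S(\mathbf{A}_k)\}$ from above \emph{uniformly in $k$}, and then to translate the entire spectrum leftward by choosing $\sigma^2$ large.

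The decisive step is this uniform upper bound. I would obtain it via the numerical abscissa: for a real matrix $M$ and any eigenvalue $\lambda$ with unit eigenvector $x$ one has $\Re\lambda=\Re\langle Mx,x\rangle=\langle\tfrac12(M+M^{T})x,x\rangle\le\lambda_{\max}\big(\tfrac12(M+M^{T})\big)$. Applying this to $M=\mathbf{A}_k$ and splitting the symmetric part as
\[
\tfrac12(\mathbf{A}_k+\mathbf{A}_k^{T})=-\mu_k\begin{bmatrix}d_u&0\\0&d_v\end{bmatrix}+S,\qquad S:=\begin{bmatrix}a&\tfrac{b+c}{2}\\[2pt]\tfrac{b+c}{2}&d\end{bmatrix},
\]
where $S$ is fixed and independent of $k$, I can invoke Weyl's inequality. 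Since $\mu_k\ge0$ and $d_u,d_v>0$, the diffusion block has largest eigenvalue $-\mu_k\min(d_u,d_v)\le0$, so $\lambda_{\max}(\tfrac12(\mathbf{A}_k+\mathbf{A}_k^{T}))\le\lambda_{\max}(S)=:\Lambda_S$ for every $k$. This yields the desired ceiling $\sup_{k\ge1}\max\{\Re\lambda:\lambda\in\mathfrak S(\mathbf{A}_k)\}\le\Lambda_S<\infty$, with $\Lambda_S$ depending only on $a,b,c,d$.

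To conclude, I fix any $\omega>0$ and set $\sigma_0^2:=2(\Lambda_S+\omega)$. Then for every $\sigma^2\ge\sigma_0^2$ and every $k\ge1$, any $z\in\mathfrak S(\mathbf{A}_k-\mathbf B^2/2)$ is of the form $z=\lambda-\sigma^2/2$ with $\Re\lambda\le\Lambda_S$, whence $\Re(z)\le\Lambda_S-\sigma^2/2\le-\omega$, which is precisely the assertion.

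The main obstacle is conceptual rather than computational: the Turing mechanism guarantees that for some $k$ the matrix $\mathbf{A}_k$ genuinely has an eigenvalue with positive real part, so one cannot hope to place $\mathfrak S(\mathbf{A}_k)$ itself in the left half-plane. The point to get right is that these positive real parts nonetheless remain bounded above uniformly in $k$ — the diffusion contribution, scaling with $\mu_k\to+\infty$, can only drag the spectrum to the left — and the symmetric-part estimate captures this uniform bound in one stroke, sidestepping any case distinction between real and complex eigenvalues of $\mathbf{A}_k$.
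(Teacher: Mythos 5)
Your proof is correct, but it follows a genuinely different route from the paper's. The paper works directly with the characteristic polynomial of $E_k=\mathbf A_k-\tfrac{1}{2}\mathbf B^2$: via Vieta's formulas it shows the trace $\mathbf w(\mu_k,\sigma)$ is always negative and the determinant $\mathbf z(\mu_k,\sigma)$ is positive for $\sigma^2$ large, then splits into the complex-eigenvalue case (where the common real part is $\mathbf w/2$) and the real-eigenvalue case (where the larger root is bounded explicitly using the specific Brusselator values $a=B-1$, $b=A^2$, $c=-B$, $d=-A^2$), arriving at the explicit rate $\omega=\min\{(-\mathcal T+\sigma^2)/2,\;1-B+\sigma^2/2\}$. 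You instead observe that $\mathbf B^2=\sigma^2 I$ makes the It\^o correction a pure spectral shift, and bound $\sup_{k\ge 1}\max\{\Re\lambda:\lambda\in\mathfrak S(\mathbf A_k)\}$ once and for all by the Bendixson (numerical abscissa) inequality combined with Weyl's inequality, so that the diffusion term can only move the spectrum leftward. This avoids the case distinction between real and complex eigenvalues, uses nothing about $a,b,c,d$ beyond $\Lambda_S<\infty$, generalizes immediately to $n\times n$ systems, and still yields a quantitative rate $\Re(z)\le \Lambda_S-\sigma^2/2$ that improves as $\sigma^2$ grows, just as the paper's $\omega$ does. What you give up is sharpness of the threshold: $\Lambda_S=\lambda_{\max}\bigl(\tfrac12(S+S^{T})\bigr)$ can strictly exceed the actual supremum of the spectral abscissae of the $\mathbf A_k$, whereas the paper's computation tracks the eigenvalues themselves and produces the concrete threshold $\sigma_0^2>2(B-1)$. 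Two minor remarks: your definition $\sigma_0^2:=2(\Lambda_S+\omega)$ is legitimate here because $\Lambda_S\ge a=B-1>0$ in the regime considered, so the right-hand side is positive; and your symmetric-part bound has the added benefit (not needed for the lemma but useful for Theorem \ref{thm1}) of controlling $\|e^{tE_k}\|$ with a constant uniform in $k$, which a purely spectral statement for non-normal matrices does not immediately give.
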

			\begin{proof}
				Denote by $E_{k}=\mathbf{A}_k- \dfrac{\mathbf{B}^2}{2}$. The characteristic polynomial of $E_k$ is
				\begin{equation*}
					\begin{aligned}
						\det(E_{k}-\lambda I) &= \left( a -d_{u} \mu_k-\frac{\sigma^2}{2}-\lambda \right) 
						\left(d-d_{v} \mu_k-\frac{\sigma^2}{2}-\lambda\right)-bc\\
						&=  \lambda^2-\lambda \left[\mathcal T-\left(d_{u}+d_{v}\right)\mu_k -  { \sigma^2 }\right]\\
						&\qquad 
						+\frac{\sigma^4}{4} - \frac{\sigma^2}{2}[\mathcal T - (d_u+d_v)\mu_k] + d_ud_v\mu_k^2 - (ad_v + dd_u)\mu_k + \mathcal D\\
						&=: \lambda^2 - \mathbf w(\mu_k,\sigma)\lambda + \mathbf z(\mu_k,\sigma).
					\end{aligned}
				\end{equation*}
				The eigenvalues $\lambda_{k,1}$ and $\lambda_{k,2}$ of $E_k$ satisfy
				\begin{equation}\label{Viet}
					\lambda_{k,1}+\lambda_{k,2} = \w(\mu_k,\sigma) \quad \text{ and } \quad \lambda_{k,1} \lambda_{k,2} = \z(\mu_k,\sigma).
				\end{equation}
				Since $\mathcal T < 0$ and $\mu_k\ge 0$, we have $\w(\mu_k,\sigma) < 0$ for all $k\ge 1$ and all $\sigma \in \mathbb R$.  We first rewrite $\z(\mu_k,\sigma)$ as
				\begin{equation*}
					\z(\mu_k,\sigma) = \frac{\sigma^4}{4} + \frac{\sigma^2}{2}(-\mathcal{T}) + d_ud_v\mu_k^2 + \bra{\frac{\sigma^2}{2}(d_u+d_v) - (ad_v + dd_u)}\mu_k + \mathcal{D}
				\end{equation*}
				and deduce that, thanks to $\mathcal T< 0$ and $\mathcal D>0$, there exists $\sigma_0 \in \mathbb R$ such that
				\begin{equation*}
					\z(\mu_k,\sigma) > 0 \quad \text{ for all } \quad \sigma^2 \ge \sigma_0^2 \text{ and all } k \ge 1.
				\end{equation*}
				It follows then from \eqref{Viet} that $\Re(\lambda_{k,1}) < 0$ and $\Re(\lambda_{k,2})<0$. If $\mathbf{w}(\mu_k,\sigma)^2 - 4\mathbf{z}(\mu_k,\sigma) \le 0$, then
				\begin{equation*}
					\Re(\lambda_{k,1}) = \Re(\lambda_{k,2}) = \frac{\mathbf{w}(\mu_k,\sigma)}{2} = \frac{\mathcal T - (d_u+d_v)\mu_k - \sigma^2}{2} \le \frac{\mathcal T - \sigma^2}{2} < 0.
				\end{equation*}
				If $\mathbf{w}(\mu_k,\sigma)^2 - 4\mathbf{z}(\mu_k,\sigma) \ge 0$, then $\lambda_{k,1}, \lambda_{k,2}$ are both negative and 
				\begin{equation*}
					\begin{aligned}
						&\max\{\lambda_{k,1}, \lambda_{k,2}\} = \frac 12\bra{\mathbf w(\mu_k,\sigma) + \sqrt{\mathbf{w}(\mu_k,\sigma)^2 - 4\mathbf{z}(\mu_k,\sigma)}\,}\\
						&= \frac 12\bra{ \mathcal T - (d_u+d_v)\mu_k - \sigma^2 + \sqrt{ (d_u-d_v)^2\mu_k^2 + 2(a-d)(d_u - d_v)\mu_k + \mathcal T^2 - 4\mathcal D}\,}\\
						&= \frac 12\bra{ \mathcal T - (d_u+d_v)\mu_k - \sigma^2 + \sqrt{ [(d_u-d_v)\mu_k + B + A^2 - 1]^2 - 4A^2B}\,}\\
						&\le \frac 12\bra{ \mathcal T - (d_u+d_v)\mu_k - \sigma^2 +  (d_u-d_v)\mu_k + B + A^2 - 1}\\
						&=(B-1) - d_v\mu_k -\frac{\sigma^2}{2} \le B - 1 - \frac{\sigma^2}{2}<  0 \quad \forall k\ge 1,
					\end{aligned}
				\end{equation*}
				provided $\sigma^2 > 2(B-1)$. Therefore, we can choose $\sigma_0 \in \mathbb R$ such that $\sigma_0^2 > 2(B-1)$, then for all $\sigma^2 \ge \sigma_0^2$ and for any $k\ge 1$,
				\begin{equation*}
					\mathfrak{S}\left(\mathbf{A}_k - \dfrac{\mathbf B^2}{2}\right) \subset \{z\in \mathbb C: \, \Re(z) \le -\omega\},\quad \forall k\ge 1,
				\end{equation*}
				with
				\begin{equation*}
					\omega:= \min\left\{\frac{-\mathcal T + \sigma^2}{2}, -B + 1 + \frac{\sigma^2}{2} \right\} > 0.
				\end{equation*}
			\end{proof}
			
			\begin{theorem}\label{thm1}
				Assume that $1 < B < 1 + A^2$. Then for any diffusion coefficients $d_u, d_v$, the solution to the linearized Brusselator system \eqref{LR_sys} is exponentially converging to zero as $t\to\infty$ almost surely, provided $2\sigma^2 > B - 1$. Moreover, we have the Lyapunov exponent
				\begin{equation*}
					\limsup_{t\to\infty}\frac{1}{t}\ln(\|u(t)\|_{L^2(\Omega)} + \|v(t)\|_{L^2(\Omega)}) \le -\omega < 0 \text{ almost surely}, 
				\end{equation*}
				with
				\begin{equation*}
					\omega = \min\left\{\frac{-\mathcal T + \sigma^2}{2}, -B + 1 + \frac{\sigma^2}{2} \right\} > 0.
				\end{equation*}
			\end{theorem}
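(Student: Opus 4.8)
The plan is to diagonalize the linear dynamics in the Neumann eigenbasis, solve each Fourier mode explicitly, and then reassemble the $L^2$ norm while isolating the randomness in the single scalar factor $e^{\sigma W_t}$. By Parseval's identity applied to the expansion \eqref{Fourier_expansion}, one has $\|u(t)\|_{L^2(\Omega)}^2 + \|v(t)\|_{L^2(\Omega)}^2 = \sum_{k\ge 1}|V_k(t)|^2$, so it suffices to control $\sum_k |V_k(t)|^2$. Since $\sigma_u=\sigma_v=\sigma$ forces $\mathbf B = \sigma I$, we have $\exp(\mathbf B W_t) = e^{\sigma W_t} I$, and the explicit solution \eqref{sol1} collapses to $V_k(t) = e^{\sigma W_t}\exp(E_k t)V_{k,0}$ with $E_k = \mathbf A_k - \tfrac12\mathbf B^2$. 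Consequently
\begin{equation*}
  \|u(t)\|_{L^2(\Omega)}^2 + \|v(t)\|_{L^2(\Omega)}^2 = e^{2\sigma W_t}\sum_{k\ge 1}\left|\exp(E_k t)V_{k,0}\right|^2,
\end{equation*}
which cleanly separates the deterministic decay of the semigroups $\exp(E_k t)$ from the random prefactor $e^{2\sigma W_t}$.

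The core step, and the main obstacle, is a bound $\|\exp(E_k t)\| \le C(1+t)e^{-\omega t}$ that is \emph{uniform} in $k$, with $\omega$ as in Lemma \ref{lem1}. That lemma already places the spectrum of each $E_k$ in $\{\Re z \le -\omega\}$, but since $\|E_k\|$ grows like $\mu_k$ the spectral bound alone is not enough: near values of $\mu_k$ where the two eigenvalues collide the matrix is nearly defective and the constant in the naive $2\times2$ estimate degrades. To bypass this I would split the modes. For the finitely many $k$ with $\mu_k \le \Lambda$ (a threshold fixed below), each $E_k$ is a fixed matrix with spectral abscissa $\le -\omega$, hence $\|\exp(E_k t)\|\le C_k(1+t)e^{-\omega t}$, and the maximum of the finitely many $C_k$ is admissible. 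For the remaining high modes I would use the logarithmic norm $\ell(E_k):=\tfrac12\lambda_{\max}(E_k+E_k^\top)$, which gives $\|\exp(E_k t)\|\le e^{\ell(E_k)t}$ for $t\ge 0$; computing the symmetric part,
\begin{equation*}
  \lambda_{\max}(E_k+E_k^\top) = \mathcal T - (d_u+d_v)\mu_k - \sigma^2 + \sqrt{[(d_u-d_v)\mu_k-(A^2+B-1)]^2 + (A^2-B)^2}\ \longrightarrow\ -\infty
\end{equation*}
as $\mu_k\to\infty$, since the square root grows only like $|d_u-d_v|\mu_k$, which is dominated by $(d_u+d_v)\mu_k$. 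Thus $\ell(E_k)\le -\omega$ once $\mu_k>\Lambda$, giving $\|\exp(E_k t)\|\le e^{-\omega t}$ with no polynomial factor for all high modes. Combining the two ranges yields $\|\exp(E_k t)\|\le C(1+t)e^{-\omega t}$ for every $k\ge1$.

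With this uniform bound the conclusion is bookkeeping. Summing against the initial data gives
\begin{equation*}
  \sum_{k\ge 1}\left|\exp(E_k t)V_{k,0}\right|^2 \le C^2(1+t)^2e^{-2\omega t}\sum_{k\ge 1}|V_{k,0}|^2,
\end{equation*}
where the last series is the squared $L^2$-norm of the (linearized) initial datum and is finite. Inserting this into the identity above and using $\|u\|_{L^2}+\|v\|_{L^2}\le \sqrt2\,(\|u\|_{L^2}^2+\|v\|_{L^2}^2)^{1/2}$, we get $\|u(t)\|_{L^2(\Omega)}+\|v(t)\|_{L^2(\Omega)}\le C'(1+t)e^{-\omega t}e^{\sigma W_t}$ for a deterministic constant $C'$. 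Taking logarithms, dividing by $t$ and letting $t\to\infty$, the terms $\tfrac1t\ln(1+t)$ and $\tfrac1t\ln C'$ vanish while $\tfrac{\sigma W_t}{t}\to 0$ almost surely by the strong law of large numbers for Brownian motion, leaving $\limsup_{t\to\infty}\tfrac1t\ln(\|u(t)\|_{L^2}+\|v(t)\|_{L^2})\le -\omega$ almost surely, with $\omega>0$ exactly as guaranteed by Lemma \ref{lem1}. I expect the uniform matrix-exponential estimate to be the only genuine difficulty; as an alternative to the logarithmic-norm splitting one could note that the block-diagonal generator is sectorial and that for analytic semigroups the growth bound equals the spectral bound, which provides control with rate $-\omega+\varepsilon$ for every $\varepsilon>0$ and suffices after taking $\limsup$.
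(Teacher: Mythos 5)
Your proposal is correct and follows essentially the same route as the paper: Fourier decomposition in the Neumann eigenbasis, the explicit solution \eqref{sol1} with $\mathbf B=\sigma I$ so that the randomness factors out as $e^{\sigma W_t}$, Parseval's identity, and the law of large numbers $W_t/t\to 0$ a.s.\ to kill the stochastic prefactor in the Lyapunov exponent. The one place you go beyond the paper is the uniform-in-$k$ bound $\|\exp(E_kt)\|\le C(1+t)e^{-\omega t}$: the paper simply asserts this from the spectral localization of Lemma \ref{lem1}, whereas your splitting into finitely many low modes plus a logarithmic-norm estimate $\tfrac12\lambda_{\max}(E_k+E_k^{\top})\le-\omega$ for large $\mu_k$ (using that the square root grows like $|d_u-d_v|\mu_k$, dominated by $(d_u+d_v)\mu_k$) actually justifies the uniformity that the paper's argument tacitly relies on.
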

			\begin{proof}
				From \eqref{Fourier_expansion},  Parseval's identity and \eqref{sol1} it follows that
				\begin{equation*}
					\begin{aligned}
						\|u(t)\|_{L^2(\Omega)}^2 + \|v(t)\|_{L^2(\Omega)}^2 &= \sum_{k= 1}^{\infty}|V_k(t)|^2\\
						& = \sum_{k=1}^{\infty}\left|\exp\left\{2\bra{\mathbf A_k - \frac{\mathbf B^2}{2}}t \right\}\right| |\exp(2\mathbf BW_t)| |V_{k,0}|^2.
					\end{aligned}
				\end{equation*}
				Thanks to the property of white noise, we have
				\begin{equation*}
					\lim_{t\to\infty}\frac{\sigma W_t}{t} = 0 \quad \text{a.s.}
				\end{equation*}
				and therefore, for any $\eps>0$ there exists $C_\eps>0$ such that
				\begin{equation*}
					\exp(\sigma W_t) \le C_\eps e^{\eps t} \quad \forall t\ge 0,
				\end{equation*}
				almost surely. Therefore,
				\begin{equation*}
					|\exp(2\mathbf B W_t)| \le C_\eps^2 e^{2\eps t} \quad \forall t\ge 0.
				\end{equation*}
				On the other hand, from Lemma \ref{lem1}, 
				\begin{equation*}
					\left|\exp\left\{2\bra{\mathbf A_k - \frac{\mathbf B^2}{2}}t \right\}\right| \le Ce^{-2\omega t}.
				\end{equation*}
				Thus we can estimate
				\begin{equation*}
					\begin{aligned}
						&\limsup_{t\to\infty}\frac{1}{t}\ln\bra{\|u(t)\|_{L^2(\Omega)} + \|v(t)\|_{L^2(\Omega)}}\\
						&\le  \limsup_{t\to\infty}\frac{1}{2t}\ln\bra{\frac 12\bra{\|u(t)\|_{L^2(\Omega)}^2 + \|v(t)\|_{L^2(\Omega)}^2}}\\
						&\le\limsup_{t\to\infty}\frac{1}{2t}\ln\bra{C_\eps e^{(-2\omega + 2\eps)t} \sum_{k=1}^{\infty}|V_{k,0}|^2} 
						\\
						&\le -\omega + \eps +  \limsup_{t\to\infty}\frac{1}{2t}\ln\bra{C_\eps (\|u_0\|_{L^2(\Omega)}^2 + \|v_0\|_{L^2(\Omega)}^2)}\\
						&= -\omega + \eps 
					\end{aligned}
				\end{equation*}
				almost surely for $\eps>0$ arbitrary. This confirms the Lyapunov exponent and finishes the proof of Theorem \ref{thm1}.
			\end{proof}
			
			\medskip
			Theorem \ref{thm1} shows that with sufficiently large noise intensities, system \eqref{LR_sys} is stable \textit{regardless of diffusion coefficients}. A direct consequence is the suppression of Turing instability by noise. This is demonstrated in Figures \ref{fig:LR_Turing} and \ref{fig:LR_suppression} where we consider the one dimensional case $\Omega = (0,1)$. More precisely, Figure \ref{fig:LR_Turing} shows the evolution of solutions to \eqref{LR_sys} when Turing instability occurs. The parameters in this case 
			\begin{equation*}
				A = 1, \quad B = 1.8, \quad d_u = 5\times 10^{-5}, \quad d_v = 2\times 10^{-3}
			\end{equation*}
			satisfy $B < 1 + A^2$, which makes the ODE \eqref{ODE} stable, but since $d_u/d_v$ is sufficiently small, the PDE system \eqref{LR_sys} is unstable. From initial data being small perturbation of the equilibrium $(0,0)$, the solutions eventually grow to have spatially inhomogeneous profiles, which are shown in Figures \ref{fig:LR_Turing_u} and \ref{fig:LR_Turing_v}. The evolution of the norm in Figure \ref{fig:LR_Turing_norm} shows that, this instability happens after an initial decay phase on the time interval $(0,15)$, and after that the norms of solutions grow exponentially. Moreover, a closer inspection in Figure \ref{fig:LR_Turing_LE_Fourier} shows that there are only a finite number of eigenmodes \textit{in the middle} (between $k = 120$ and $k=200$) which are unstable, while the others stay stable. This can be also shown theoretically (see e.g. \cite{perthame2015parabolic}).
			\begin{figure}[H]
				\centering
				\begin{subfigure}{0.31\textwidth}
					\includegraphics[width=5.2cm]{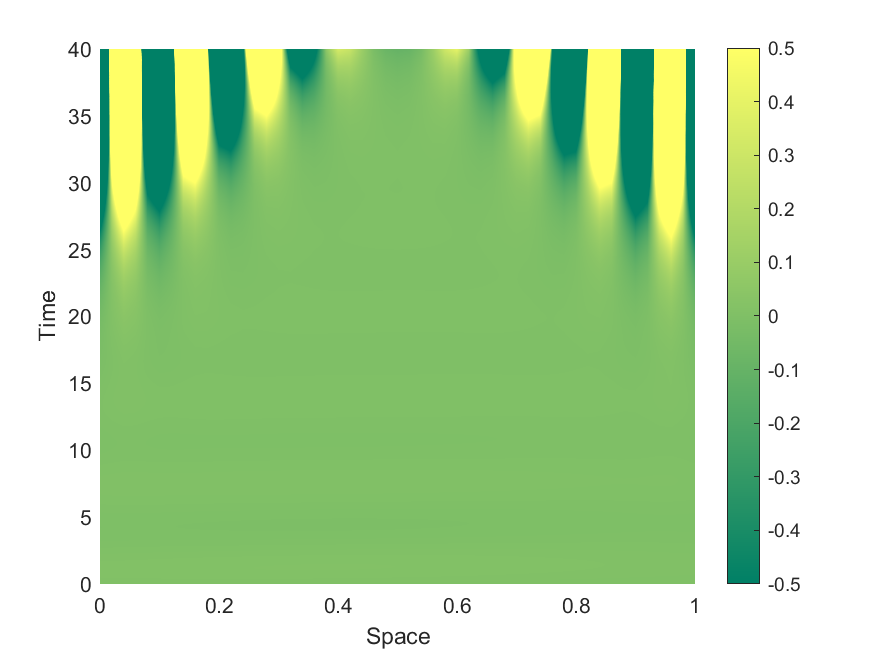}
					\caption{Evolution of $u$}
					\label{fig:LR_Turing_u}
				\end{subfigure} \hspace{.5in}
				\begin{subfigure}{0.31\textwidth}
					\includegraphics[width=5.2cm]{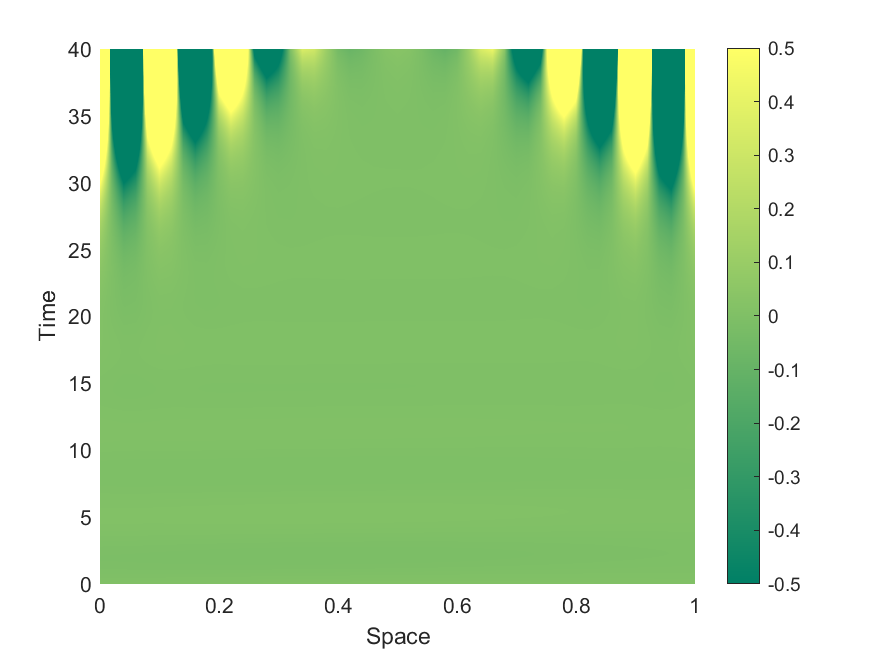}
					\caption{Evolution of $v$}
					\label{fig:LR_Turing_v}
				\end{subfigure}\\
				\begin{subfigure}{0.31\textwidth}
					\includegraphics[width=5.2cm]{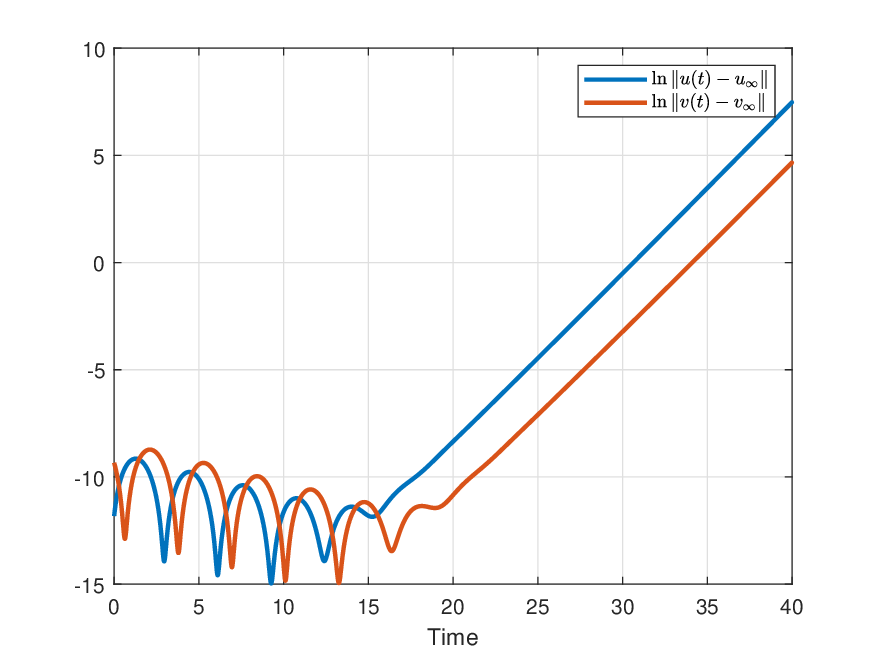}
					\caption{Evolution of the norm}
					\label{fig:LR_Turing_norm}
				\end{subfigure} \hspace{.5in}
				\begin{subfigure}{0.31\textwidth}
					\includegraphics[width=5.2cm]{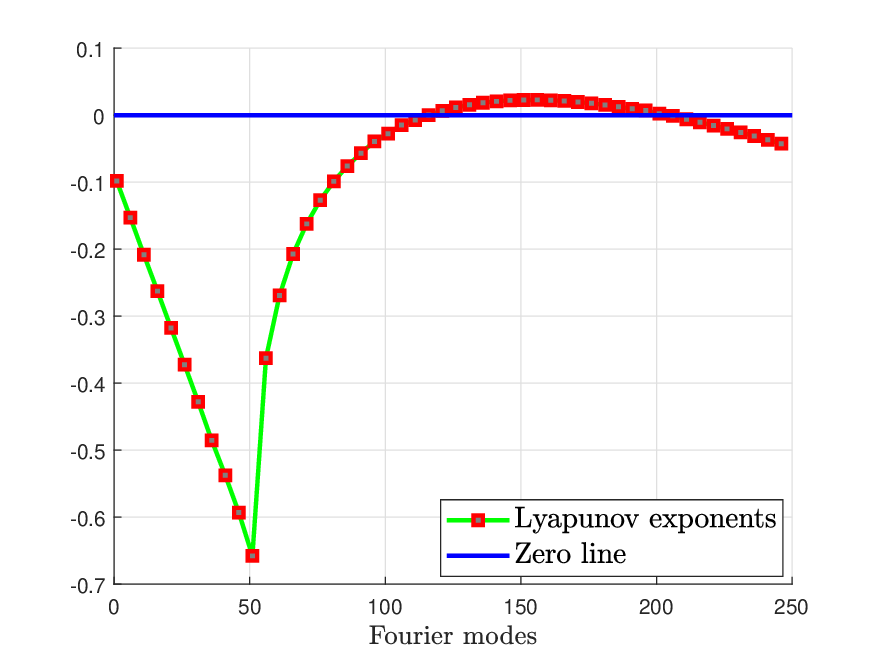}
					\caption{Lyapunov exponents of different Fourier modes}
					\label{fig:LR_Turing_LE_Fourier}
				\end{subfigure}
				\caption{Turing instability without noise of the linearized system \eqref{LR_sys}.}
				\label{fig:LR_Turing}
			\end{figure}
			
			Figure \ref{fig:LR_suppression} shows the suppression of this instability by sufficiently large noise intensities, which are the same for the equations of $u$ and $v$, i.e. $\sigma_u = \sigma_v = \sigma$. In Figure \ref{fig:LR_suppression_norm}, we see $\ln(\|u(t)\|_{L^2(\Omega)} + \|v(t)\|_{L^2(\Omega)})$ behaves almost linearly with negative slopes, which means that the solutions decay to zero exponentially. Moreover, the bigger the noise intensities $|\sigma|$, the faster solutions converge to zero, which is consistent with Theorem \ref{thm1} where the Lyapunov exponent decreases for increasing $\sigma^2$. Figure \ref{fig:LR_suppression_LE} demonstrates the behavior of Lyapunov exponents according to the noise intensity $\sigma$. The green curve with blue points shows the real Lapunov exponent, which becomes negative when $\sigma$ is somewhere in between $0.6$ and $0.8$, and decreases further as $\sigma$ grows, while the blue curve with red points shows an upper bound of the Lyapunov exponent established in Theorem \ref{thm1}. It can be seen that, while being qualitatively accurate, these upper bounds are far from being sharp. Showing optimal bound of Lyapunov exponent for \eqref{LR_sys} depending on $\sigma$ is left for future investigation.
			\begin{figure}[H]
				\centering
				\begin{subfigure}[t]{0.45\textwidth}
					\includegraphics[width=6cm]{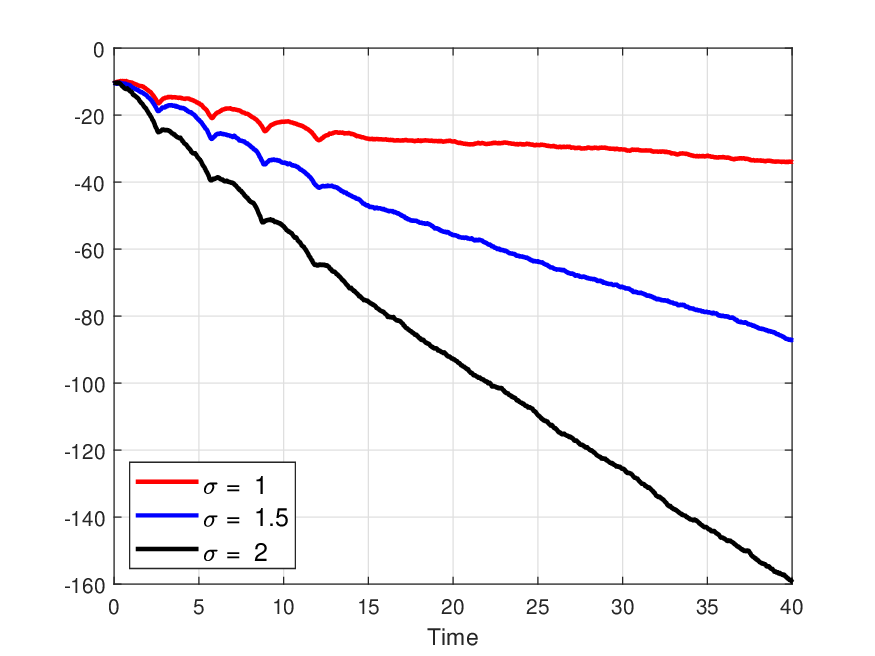}
					\caption{Suppression of Turing instability by noise. The lines present the evolution of $\ln(\|u(t)\|_{L^2(\Omega)} + \|v(t)\|_{L^2(\Omega)})$.}
					\label{fig:LR_suppression_norm}
				\end{subfigure} \hspace*{.1in}
				\begin{subfigure}[t]{0.45\textwidth}
					\includegraphics[width=6cm]{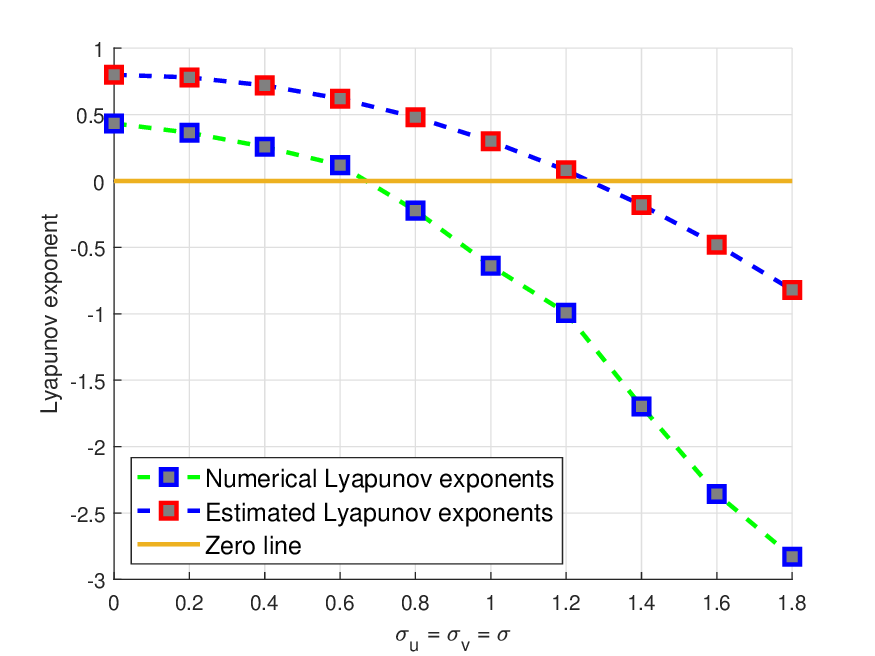}
					\caption{The computed Lyapunov exponents and the explicitly estimated Lyapunov exponents of solutions to \eqref{LR_sys}.}
					\label{fig:LR_suppression_LE}
				\end{subfigure}
				\caption{Suppression by noise with the same noise intensities.}
				\label{fig:LR_suppression}
			\end{figure}
			
			Since \eqref{LR_sys} is linear, the stability, in the presence of noise, is in fact global. This is demonstrated in 
			Figure \ref{fig:LR_global_stability}, where the solutions converge to zero with initial data being far away from the equilibrium $(0,0)$.
			\begin{figure}[H]
				\centering
				\begin{subfigure}{0.31\textwidth}
					\includegraphics[width=5.2cm]{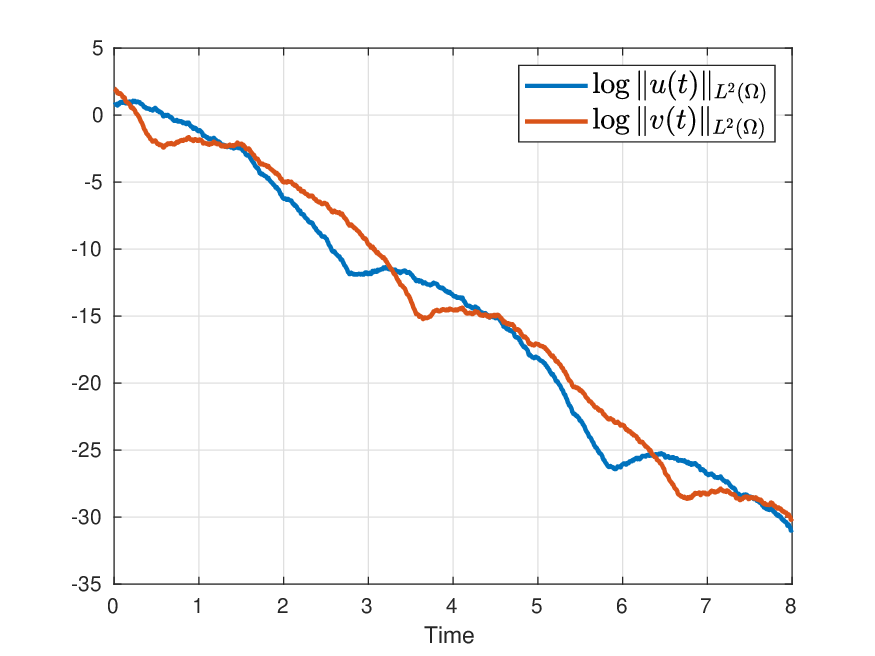}
					\caption{Evolution of the norm}
					\label{fig:LR_global_stability_norm}
				\end{subfigure}
				\begin{subfigure}{0.31\textwidth}
					\includegraphics[width=5.2cm]{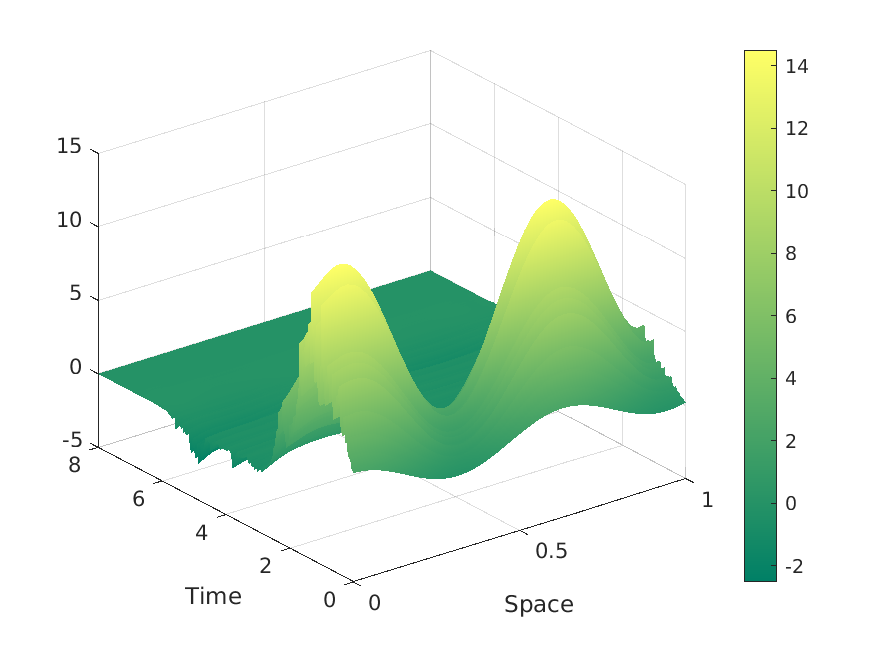}
					\caption{Evolution of $u$}
					\label{fig:LR_global_stability_u}
				\end{subfigure}
				\begin{subfigure}{0.31\textwidth}
					\includegraphics[width=5.2cm]{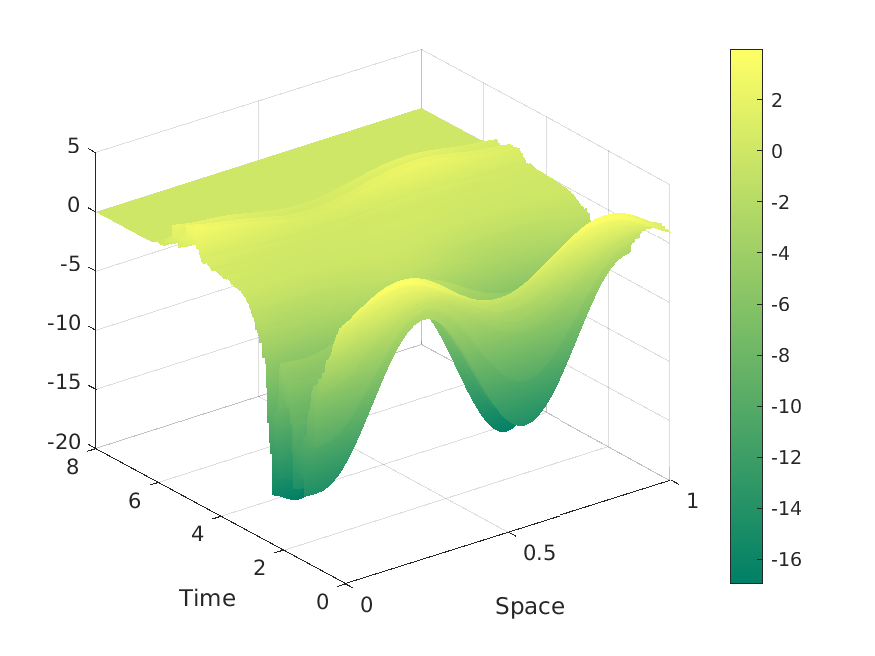}
					\caption{Evolution of $v$}
					\label{fig:LR_global_stability_v}
				\end{subfigure}
				\caption{Global stability of system \eqref{LR_sys} with high noise intensities $\sigma_u = \sigma_v = 2$.}
				\label{fig:LR_global_stability}
			\end{figure}
			
			\subsection{The case of different noise intensities}
			In this part, the noise intensities for $u$ and $v$ are different. For the sake of simplicity, we fix $\sigma_u = 0$ and vary $\sigma_v \ne 0$. Due to this difference, the matrix $\mathbf B$ and $\mathbf A_k$ in \eqref{sto1} are generally no longer commutative, and consequently an explicit expression of solutions similarly to \eqref{sol1} is not possible. Furthermore, due to the degeneracy of $\mathbf B$, the stabilization and destabilization theory in e.g. \cite[Chapter 4]{mao2007stochastic} cannot be applied to determine the stability of \eqref{sto1}. Here, we show by numerical simulations that this difference in noise intensities \textit{destabilizes} the system \eqref{sto1}, and consequently the linear PDE system \eqref{LR_sys}. 
			
			\medskip
			Consider $\Omega = (0,1)$ and the set of parameters
			\begin{equation*}
				A = 1, \quad B = 1.8, \quad d_u = 2\times 10^{-3}, \quad d_v = 1\times 10^{-3},
			\end{equation*}
			which make the ODE system \eqref{ODE} stable since $B < 1 + A^2$. Moreover, since $d_u/d_v$ is large, the PDE system is also stable, i.e. no Turing instability occurs. The destabilization by noise with $\sigma_u = 0$ and $\sigma_v \ne 0$ is shown in Figure \ref{fig:LR_destabilize}, where part (a) shows the evolution of $\ln(\|u(t)\|_{L^2(\Omega)} + \|v(t)\|_{L^2(\Omega)})$ for different values of $\sigma_v$. Here we see that starting from $\sigma_v = 1.5$, the solution starts to grow exponentially. This is confirmed in Figure \ref{fig:LR_destabilize_LE}, where numerical Lyapunov exponents are shown. From this figure, the Lyapunov exponent is monotone increasing in the value of $|\sigma_v|$, and starting from $\sigma_v = 1.5$, the Lyapunov exponent becomes positive.
			
			\begin{figure}[H]
				\centering
				\begin{subfigure}[t]{0.45\textwidth}
					\includegraphics[width=6cm]{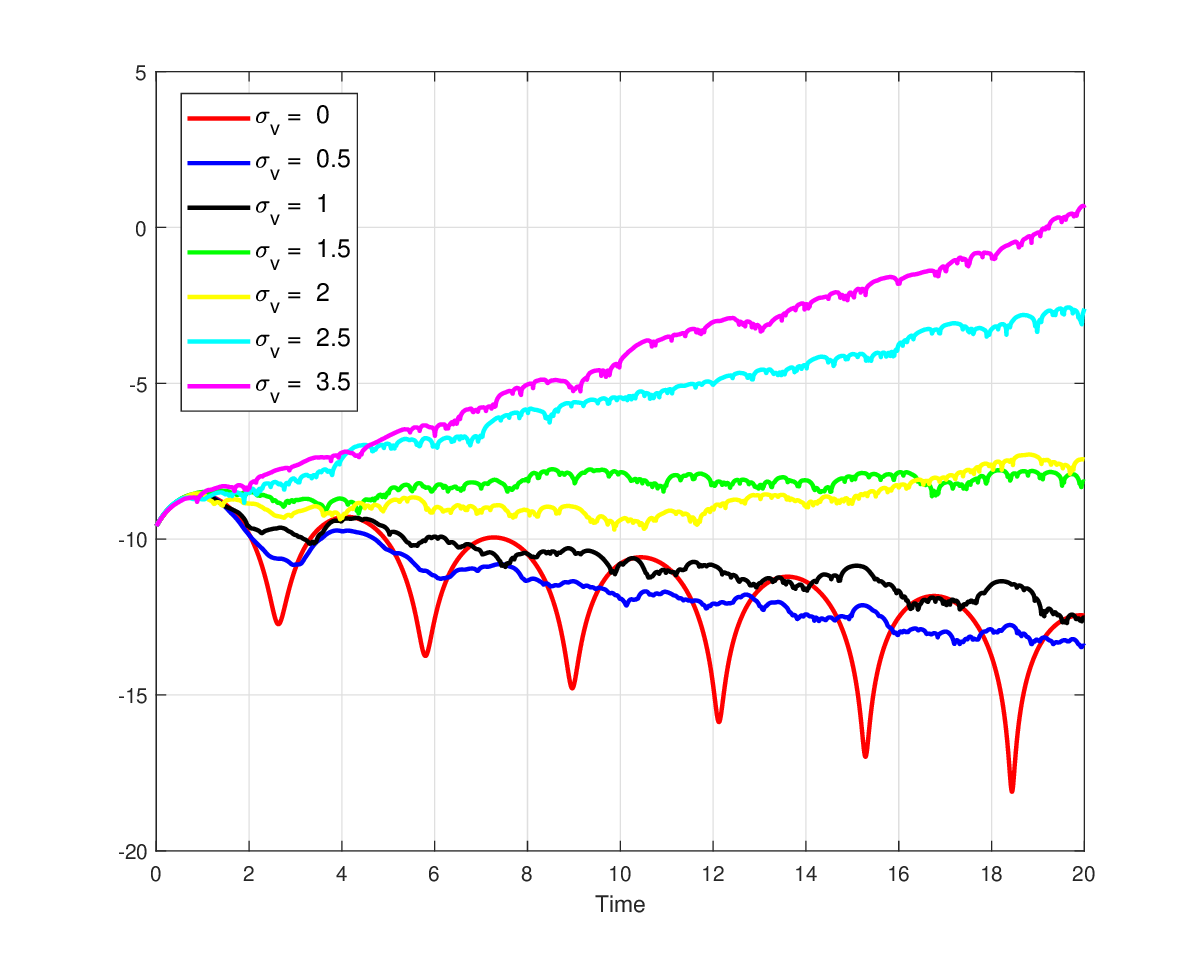}
					\caption{The evolution of $\ln(\|u(t)\|_{L^2(\Omega)} + \|v(t)\|_{L^2(\Omega)})$ according to different values of $\sigma_v$ where $\sigma_u=0$.}
					\label{fig:LR_destabilize_norm}
				\end{subfigure} \hspace*{.1in}
				\begin{subfigure}[t]{0.45\textwidth}
					\includegraphics[width=6cm]{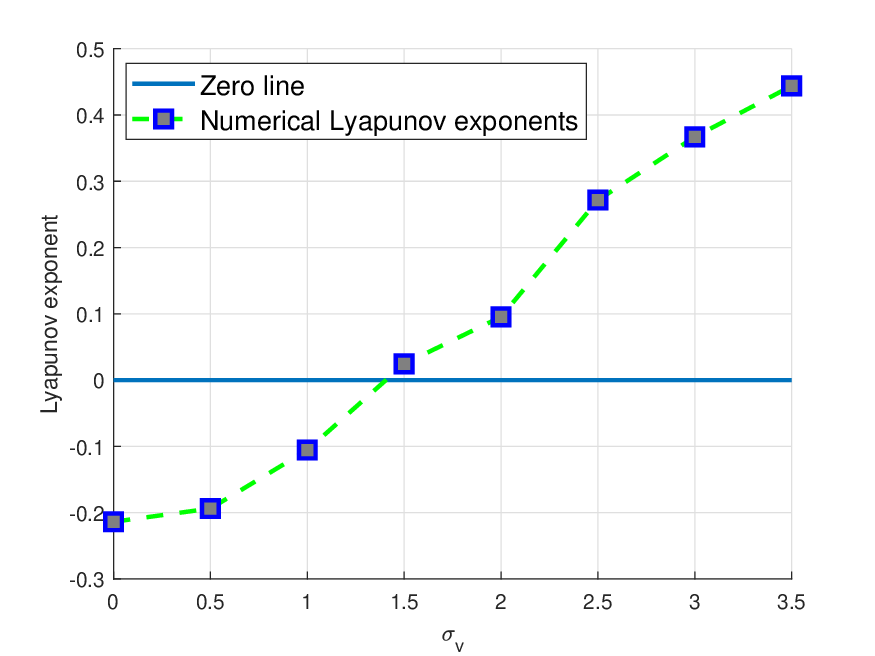}
					\caption{The computed Lyapunov exponents.}
					\label{fig:LR_destabilize_LE}
				\end{subfigure}
				\caption{Destabilization by noise with different noise intensities.}
				\label{fig:LR_destabilize}
			\end{figure}
			
			To analyze it further, we plot the profiles of solution at final time $T = 60$ Figure \ref{fig:LR_destabilize_uv}, where it clearly shows that the solution becomes spatially inhomogeneous. Moreover, in Figure \ref{fig:LR_destabilize_LE_Fourier}, we compute the Lyapunov exponents for different eigenmodes. The green line with red markers present the Lypanov exponents of the linearized system without noise, which shows that all these exponents are negative, causing the exponential stability of the system. In the presence of noise $\sigma_v = 5$ (while $\sigma_u = 0$), the Lyapunov exponents of different modes, presented by the red line with blue markers, show that some first modes (from $k=1$ to $k=16$) become positive, leading to the instability of the stochastic system. Unlike the case of deterministic instability in Figure \ref{fig:LR_Turing_LE_Fourier}, the unstable modes seem to \textit{always contain the first eigenmodes}.
			\begin{figure}[H]
				\begin{subfigure}[t]{0.45\textwidth}
					\includegraphics[width=6cm]{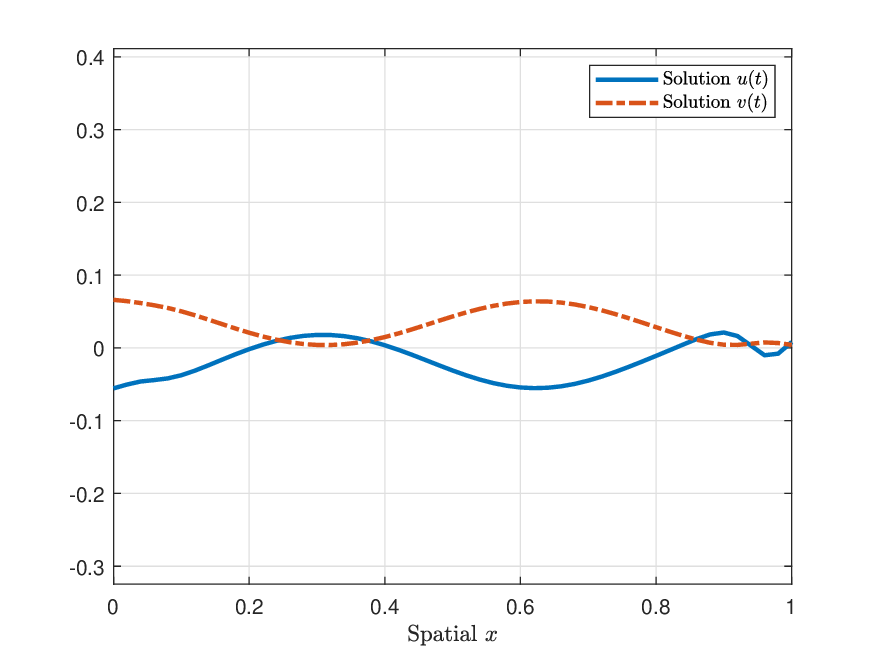}
					\caption{The profiles of $(u,v)$ at the final time $T = 60$.}
					\label{fig:LR_destabilize_uv}
				\end{subfigure} \hspace*{.1in}
				\begin{subfigure}[t]{0.45\textwidth}
					\includegraphics[width=6cm]{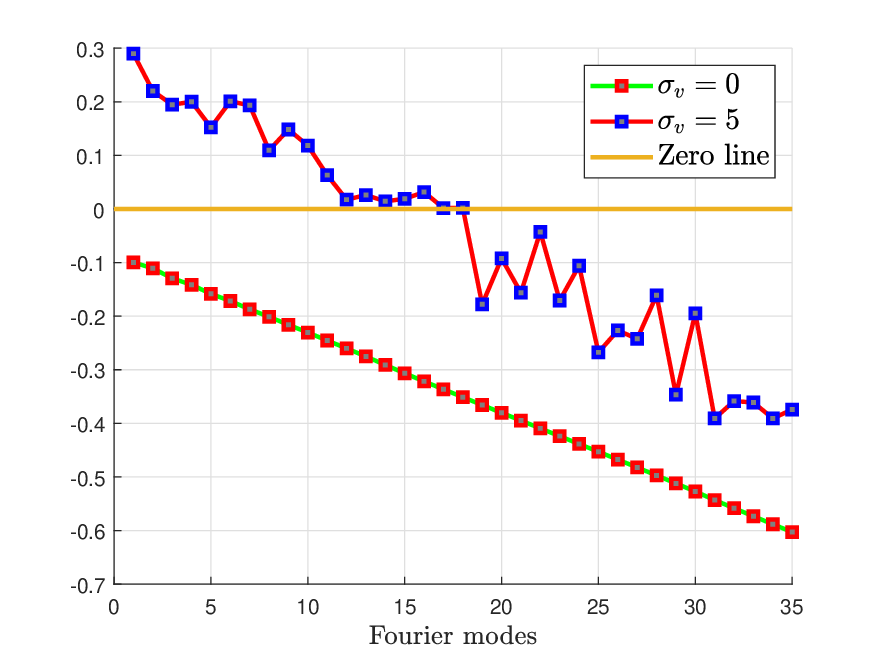}
					\caption{The Lyapunov exponents of eigenmodes from $k=1$ to $k = 35$.}
					\label{fig:LR_destabilize_LE_Fourier}
				\end{subfigure}
				\caption{The final profiles of solutions and Lyapunov exponents for different eigenmodes.}
				\label{fig:LR_destabilize_1}
			\end{figure}
			
			\section{Numerical Simulations for Nonlinear Systems}
			In this section, we investigate the effect of noise to the stability of nonlinear system \eqref{B-system} through numerical simulations. The theoretical investigation is beyond the scope of this paper and is therefore left for future studies.
			
			\subsection{Turing instability prevented by noise}
			We consider $\Omega = (0,1)$ and the following parameters
			\begin{equation*} 
				d_u = 5\times 10^{-5}, \quad d_v = 2\times 10^{-3}, \quad A = 1, \quad B = 1.8,
			\end{equation*}
			the time horizon $T = 50$, and the discretization $\delta x = 0.02$ and $\delta t = 0.005$. The initial data is set to be
			\begin{equation*} 
				u_0(x) = u_\infty + 10^{-1}\xi_1 (1+ \cos(3x)), \quad v_0(x) = v_\infty + 10^{-1}\xi_2 (1+\sin(3x))
			\end{equation*}
			where $\xi_1$ and $\xi_2$ are random numbers between $0$ and $1$.
			
			\medskip
			\noindent\textbf{The deterministic case $\sigma_u = \sigma_v = 0$}. In this case, the Turing condition is satisfied and therefore, the deterministic system possesses Turing instability, see Figure \ref{fig:unstable}. In this case, we can see from Figures \ref{fig:unstable_u} and \ref{fig:unstable_v} that the solution converges to a stable spatially inhomogeneous steady state, which is depicted in Figure \ref{fig:unstable_uv_final}.
			\begin{figure}[H]
				\centering
				\begin{subfigure}{0.31\textwidth}
					\includegraphics[width=5.2cm]{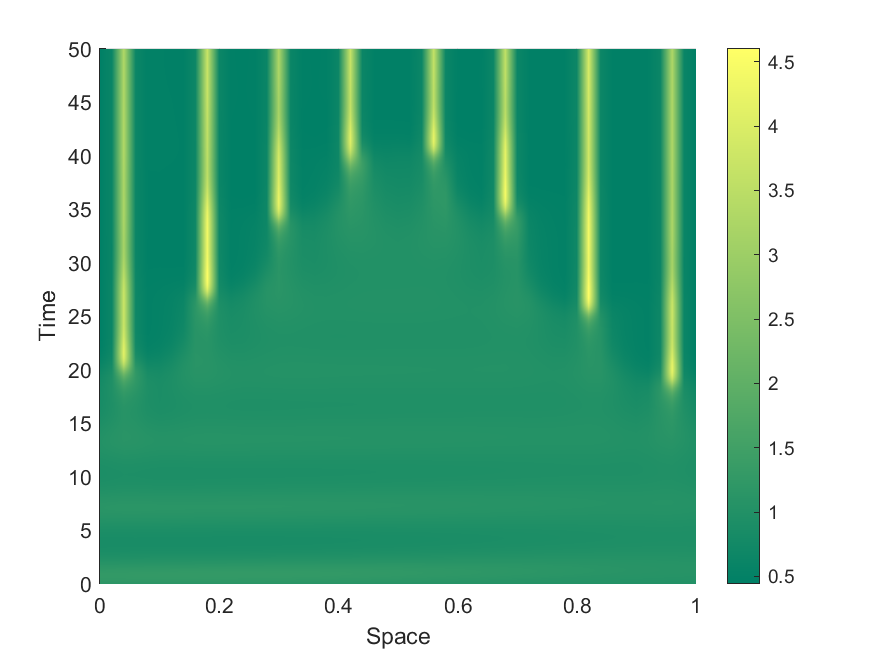}
					\caption{Evolution of $u$}
					\label{fig:unstable_u}
				\end{subfigure} \hspace{.5in}
				\begin{subfigure}{0.31\textwidth}
					\includegraphics[width=5.2cm]{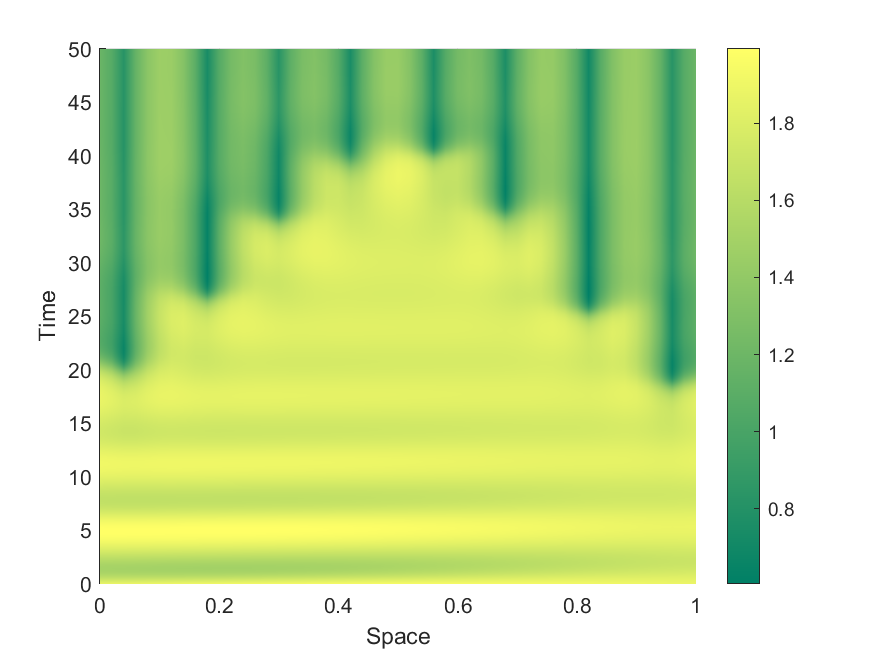}
					\caption{Evolution of $v$}
					\label{fig:unstable_v}
				\end{subfigure}\\
				\begin{subfigure}[t]{0.31\textwidth}
					\includegraphics[width=5.2cm]{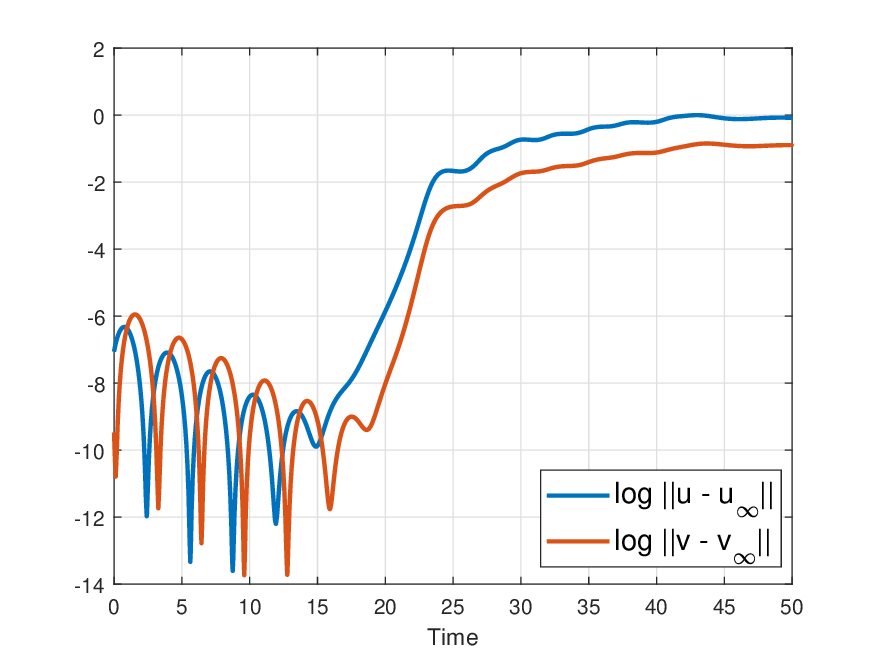}
					\caption{Evolution of the norm}
					\label{fig:unstable_norm}
				\end{subfigure} \hspace{.5in}
				\begin{subfigure}[t]{0.31\textwidth}
					\includegraphics[width=5.2cm]{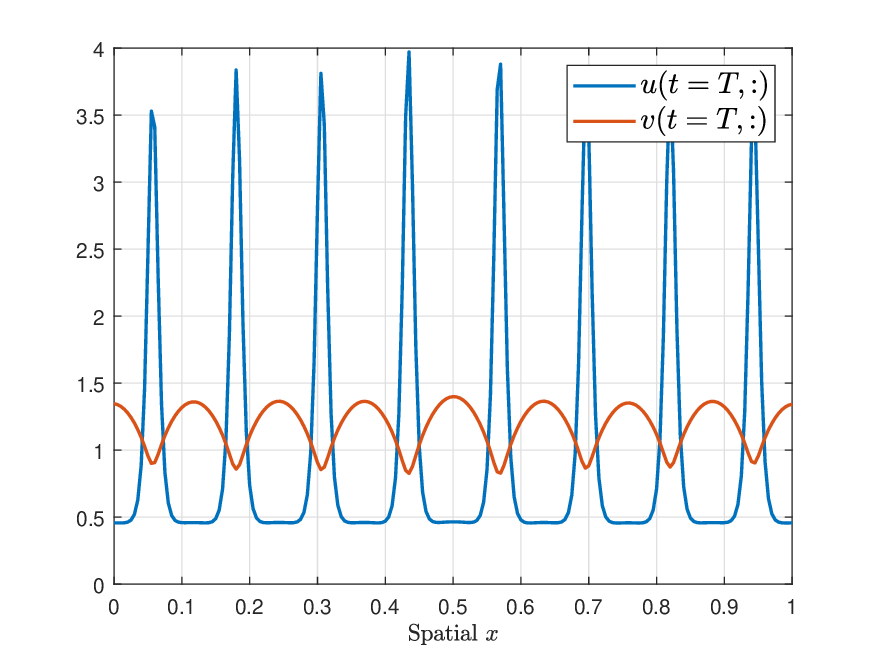}
					\caption{Profiles of $u$ and $v$ at final time $T = 50$.}
					\label{fig:unstable_uv_final}
				\end{subfigure}
				\caption{Turing instability without noise.}
				\label{fig:unstable}
			\end{figure}
			
			\medskip
			\noindent \textbf{The case of the same noise intensities $\sigma_u = \sigma_v = 1.25$}. In this case, the suppression of Turing instability is proved theoretically for the linearized system in the previous section. Here, numerical simulation suggests that it is also true for the nonlinear regime. Figure \ref{fig:noise_stabilize_norm} shows that the solution decays exponentially to the equilibrium $(u_\infty, v_\infty)$. The evolution of $u$ and $v$ are shown in Figures in \ref{fig:noise_stabilize_u} and \ref{fig:noise_stabilize_v}.
			\begin{figure}[H]
				\centering
				\begin{subfigure}{0.31\textwidth}
					\includegraphics[width=5.2cm]{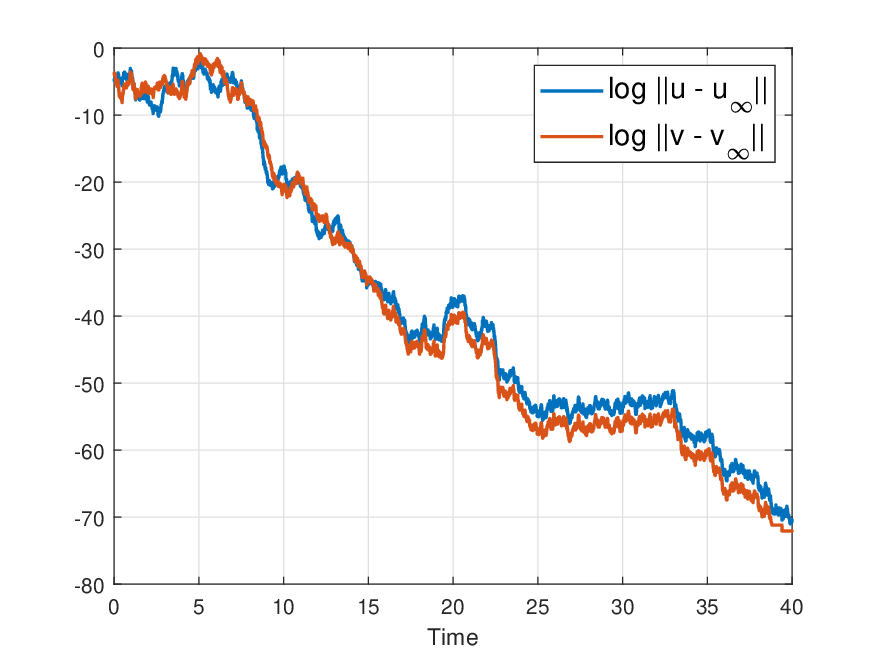}
					\caption{Evolution of the norm}
					\label{fig:noise_stabilize_norm}
				\end{subfigure}\hspace{.1in}
				\begin{subfigure}{0.31\textwidth}
					\includegraphics[width=5.2cm]{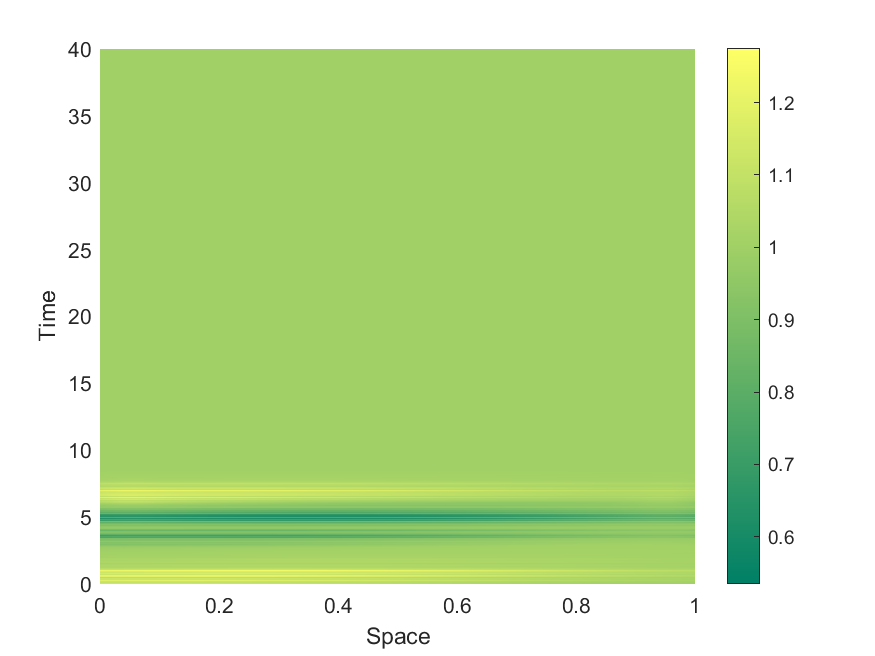}
					\caption{Evolution of $u$}
					\label{fig:noise_stabilize_u}
				\end{subfigure}\hspace{.1in}
				\begin{subfigure}{0.31\textwidth}
					\includegraphics[width=5.2cm]{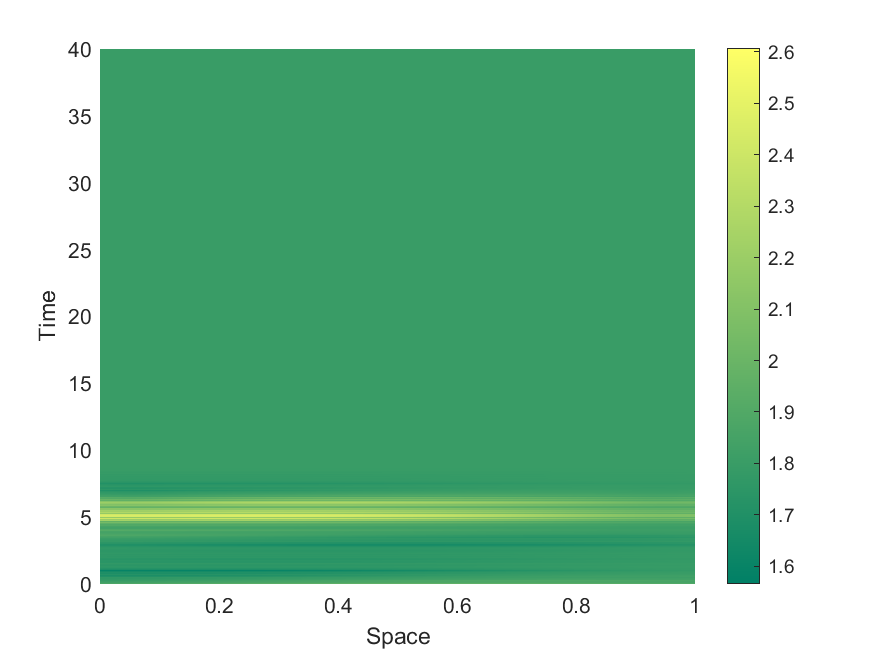}
					\caption{Evolution of $v$}
					\label{fig:noise_stabilize_v}
				\end{subfigure}
				\caption{Stabilization by noise with the same noise intensities.}
			\end{figure}
			
			\subsection{Turing instability induced by noise}
			We consider the following parameters
			\begin{equation*} 
				d_u = 0.001, \quad d_v = 0.002, \quad A = 1, \quad B = 1.95,
			\end{equation*}
			the time horizon $T = 60$, and the descretization $\delta x = 0.02$ and $\delta t = 0.005$.
			
			\medskip
			\noindent \textbf{The deterministic case $\sigma_u = \sigma_v = 0$}. In this case, the Turing condition is not satisfied as $d_u/d_v$ is not sufficiently small (see e.g. \cite[Chapter 7]{perthame2015parabolic}, and therefore we have the convergence towards the equilibrium $u_\infty = A = 2$ and $v_\infty = B/A = 2$, see Figure \ref{fig:deterministic}.
			\begin{figure}[H]
				\centering
				\begin{subfigure}{0.31\textwidth}
					\includegraphics[width=5.2cm]{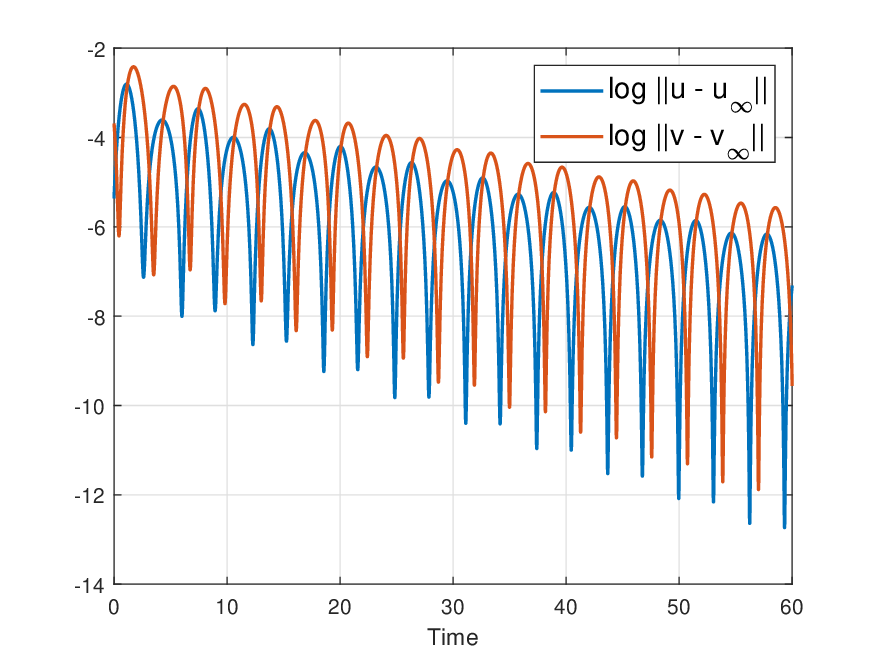}
					\caption{Evolution of the norm}
					\label{fig:deterministic_norm}
				\end{subfigure}\hspace{.1in}
				\begin{subfigure}{0.31\textwidth}
					\includegraphics[width=5.2cm]{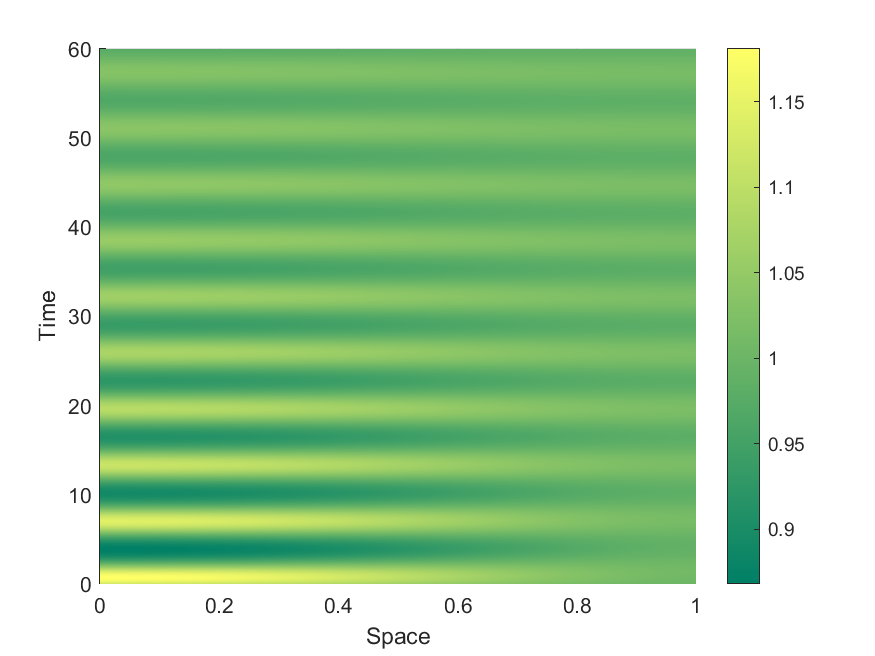}
					\caption{Evolution of $u$}
					\label{fig:deterministic_u}
				\end{subfigure}\hspace{.1in}
				\begin{subfigure}{0.31\textwidth}
					\includegraphics[width=5.2cm]{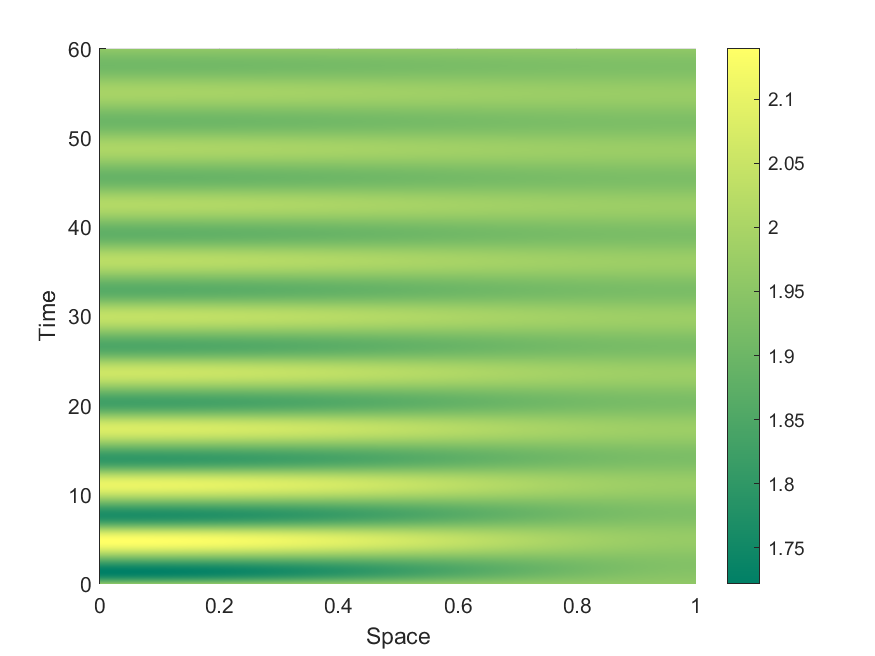}
					\caption{Evolution of $v$}
					\label{fig:deterministic_v}
				\end{subfigure}
				\caption{Deterministic stability.}
				\label{fig:deterministic}
			\end{figure}
			
			\medskip
			\noindent \textbf{The case with small noise intensities $\sigma_u = 0$ and $\sigma_v = 0.2$}. For small noise, the stability of the system is still in effect, see Figure \ref{fig:small_noise}. Similar to the linearized case, for small magnitude of $\sigma_v$, the disturbance of noise is not sufficient to drive the system away from the stable regime. The solution still converges exponentially to equilibrium, with some disruption, as shown in Figure \ref{fig:small_noise}. This can be compared with Figure \ref{fig:LR_destabilize_LE}, where with small magnitude of $\sigma_v$ the Lyapunov exponent is still negative, meaning that linearized system, and consequently the nonlinear system, is still stable.
			\begin{figure}[H]
				\centering
				\begin{subfigure}{0.31\textwidth}
					\includegraphics[width=5.2cm]{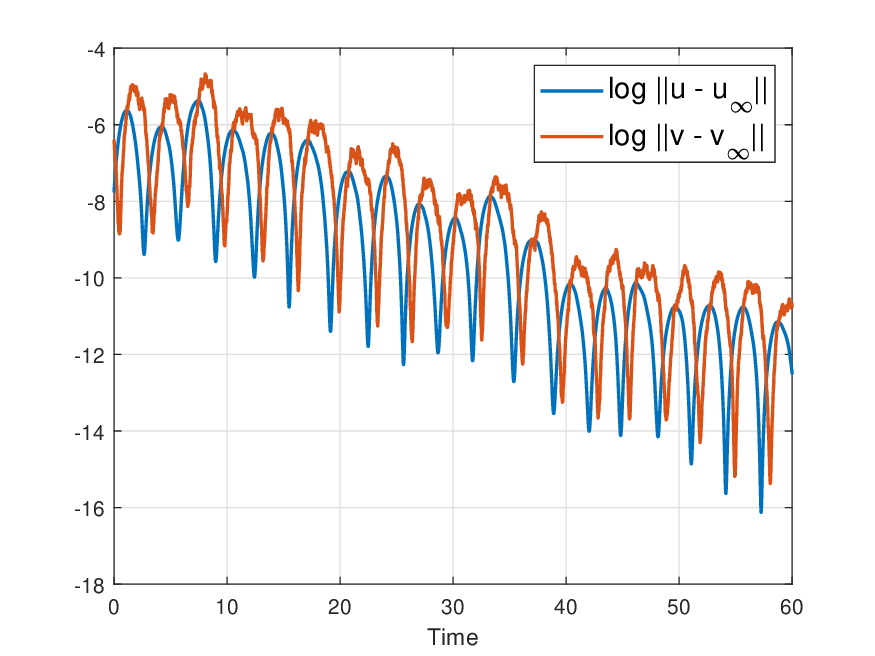}
					\caption{Evolution of the norm}
					\label{fig:small_noise_norm}
				\end{subfigure}\hspace{.1in}
				\begin{subfigure}{0.31\textwidth}
					\includegraphics[width=5.2cm]{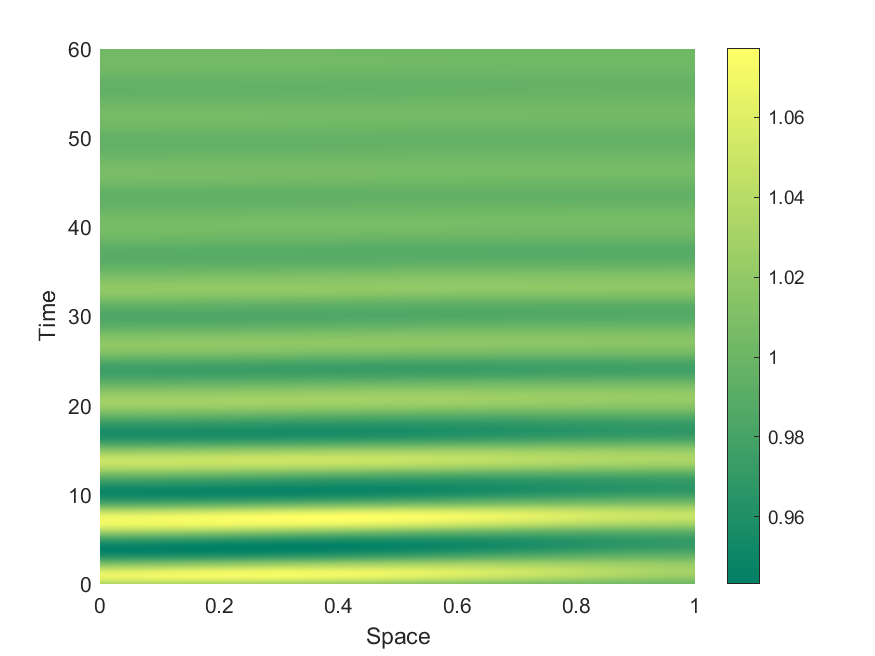}
					\caption{Evolution of $u$}
					\label{fig:small_noise_u}
				\end{subfigure}\hspace{.1in}
				\begin{subfigure}{0.31\textwidth}
					\includegraphics[width=5.2cm]{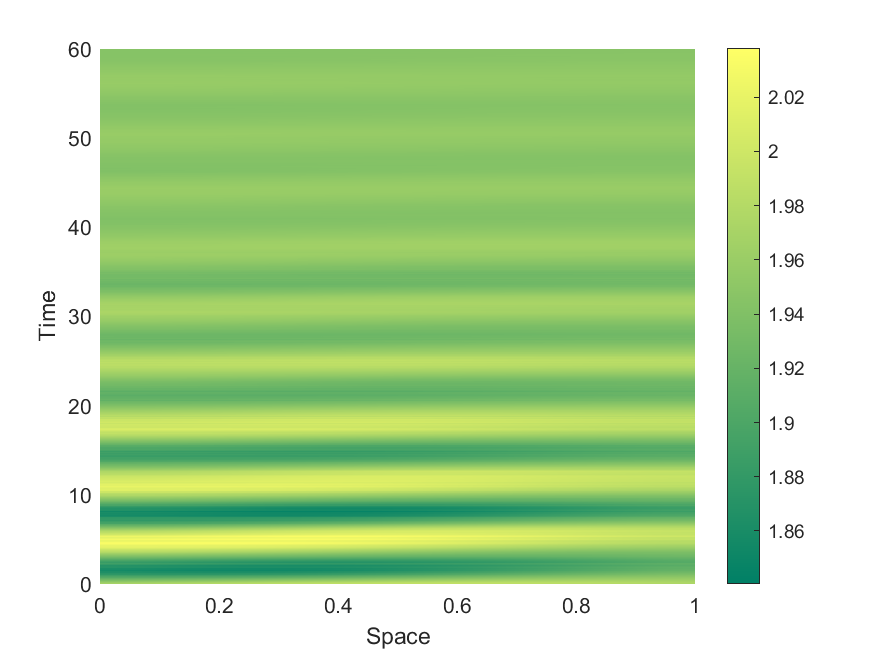}
					\caption{Evolution of $v$}
					\label{fig:small_noise_v}
				\end{subfigure}
				\caption{Small noise still preserves the stability.}
				\label{fig:small_noise}
			\end{figure}
			
			\medskip
			\noindent \textbf{The case with bigger noise intensities $\sigma_u = 0$ and $\sigma_v = 3.0$}. Finally, when the noise intensity $\sigma_v$ is sufficiently large, the noise destabilizes the system as shown in Figure \ref{fig:big_noise}. The solution is no longer converging to the spatially homogeneous steady state $(u_\infty, v_\infty)$. However, unlike the case of deterministic instability, due to the random noise, the solution also does not converge to a stable spatially inhomogeneous steady state, but fluctuates with different profiles. The large time behavior of solution in this case remains open, even in the numerical simulation.
			\begin{figure}[h!]
				\centering
				\begin{subfigure}{0.31\textwidth}
					\includegraphics[width=5.2cm]{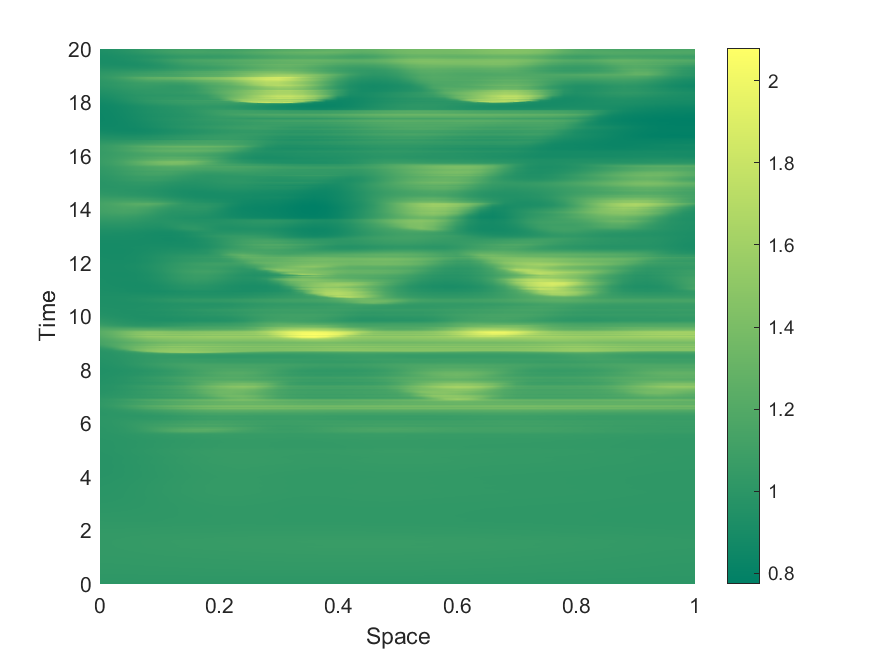}
					\caption{Evolution of $u$}
					\label{fig:big_noise_u}
				\end{subfigure} \hspace{.5in}
				\begin{subfigure}{0.31\textwidth}
					\includegraphics[width=5.2cm]{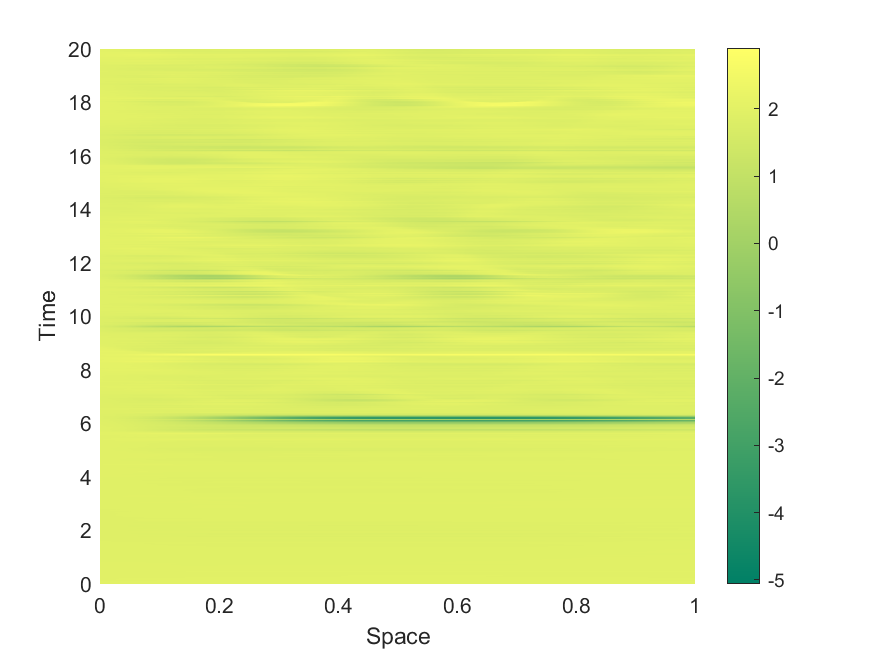}
					\caption{Evolution of $v$}
					\label{fig:big_noise_v}
				\end{subfigure}\\
				\begin{subfigure}[t]{0.31\textwidth}
					\includegraphics[width=5.2cm]{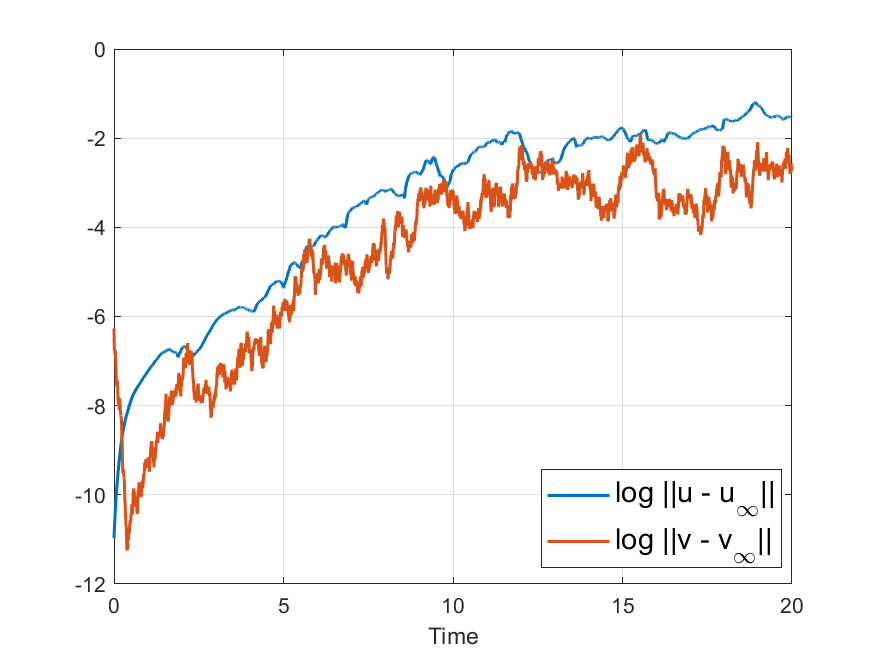}
					\caption{Evolution of the norm}
					\label{fig:big_noise_norm}
				\end{subfigure} \hspace{.5in}
				\begin{subfigure}[t]{0.31\textwidth}
					\includegraphics[width=5.2cm]{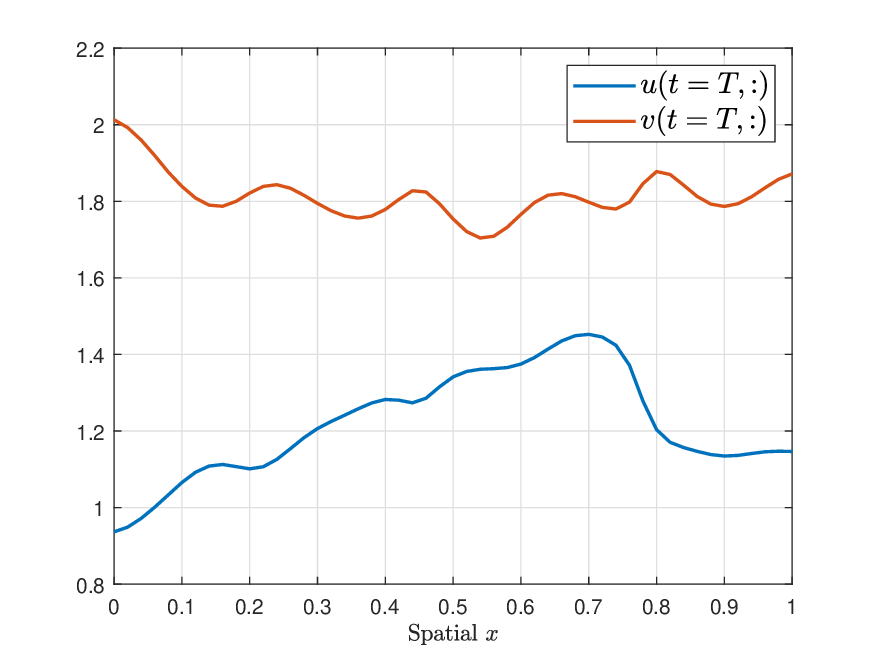}
					\caption{Profiles of $u$ and $v$ at final time $T = 20$.}
					\label{fig:big_noise_uv_final}
				\end{subfigure}
				\caption{Larger noise intensities lead to instability.}
				\label{fig:big_noise}
			\end{figure}
			%
			
			\section{Conclusion and Future Work}\label{conclusion}
			
			In this work, we have demonstrated that a multiplicative noise in Brusselator can have suppression or induced effect on Turing instability. More precisely, when both the noise intensities on both species are the same, then the Turing instability is suppressed if the intensities are sufficiently large. This is also theoretically proved by analyzing the linearized system around equilibrium, and using explicit expression of solutions to SDE systems when the deterministic and stochastic parts are commutative. On the other hand, when one noise intensity is zero, multiplicative noise tends to destabilize stable system, which is shown and analyzed by numerical simulations. These effects are shown through numerical examples to hold also for nonlinear Brusselator system. This finding represents an interesting perspective of the complex interplay between Turing stability and stochastic perturbations. Furthermore, we believe that the insights gained from this study can be extended to a wider class of nonlinear dynamical systems.
			
			\medskip
			There are certainly several important open questions to be investigated in future works, including
			\begin{itemize}
				\item can the suppression by noise with the same noise intensities be proved theoretically as for the case of scalar equation in e.g. \cite{caraballo2006stabilization,caraballo2007effect,kwiecinska1999stabilization}?
				\item how does one approach to prove the destabilizing effect when $\sigma_u= 0$ and $\sigma_v\ne 0$ since explicit expressions are not possible, and the general theory as in \cite{mao2007stochastic} seem not applicable?
			\end{itemize}
			

			\subsection*{Acknowledgement}
			Qasim Khan acknowledges the funding of the Ernst Mach (worldwide) scholarship from OeAD, Austria's Agency for Education and Internationalization, Mobility Programmers and Cooperation. The work was completed during the first author's visit to University of Graz, and the university's hospitality is greatly acknowledged. Bao Q. Tang is supported by the FWF project ``Quasi-steady-state approximation for PDE'', number I-5213. A. Suen is
			partially supported by Hong Kong General Research Fund (GRF) grant project number
			18300821, 18300622 and 18300424, and the EdUHK Research Incentive Award project
			titled ``Analytic and numerical aspects of partial differential equations''.
			
			\subsection*{CRediT authorship contribution statement}
			All authors contribute equally in this work. 
			
			\subsection*{Declaration of competing interest}
			The authors declare that they have no known competing financial interests or personal relationships that could have appeared
			to influence the work reported in this paper.
			
			\subsection*{Data availability}
			No data was used for the research described in the article.
			
			\newcommand{\etalchar}[1]{$^{#1}$}

			\end{document}